\newtheorem{definition}{Definition}
\newtheorem{prop}[definition]{Proposition}
\newtheorem{theorem}[definition]{Theorem}
\newtheorem{lemma}[definition]{Lema}
\newtheorem{remark}[definition]{Remark}
\newcommand{\txtd}{\mathrm{d}}
\title{Mean-Field Limits for Stochastic Interacting Particles on Digraph Measures}
\author{Christian Kuehn and Carlos Pulido}
\date{\today}
\begin{document}

\maketitle
    \begin{abstract}
    Many natural phenomena are effectively described by interacting particle systems, which can be modeled using either deterministic or stochastic differential equations (SDEs). In this study, we specifically investigate particle systems modeled by SDEs, wherein the mean field limit converges to a Vlasov-Fokker-Planck-type equation. Departing from conventional approaches in stochastic analysis, we explore the network connectivity between particles using diagraph measures (DGMs). DGMs are one possible tool to capture sparse, intermediate and dense network/graph interactions in the mean-field thereby going beyond more classical approaches such as graphons. Since the main goal is to capture large classes of mean-field limits, we set up our approach using measure-theoretic arguments and combine them with suitable moment estimates to ensure approximation results for the mean-field.
    \end{abstract}

	\section{Introduction}

		Consider a physical system composed of $N$ particles that interact with each other through a network/graph of connections, where information about the connectivity between particles can be represented by a graph $G$. The vertices of this graph represent each particle, and the edges of the graph model pairwise interactions between them.
		
		The study of these systems, particularly when $N$ is a very large number, has been approached in various ways, depending on different mathematical treatments used to represent the graph. To set the context, let us begin by introducing the equations that determine the system's dynamics. Let $X_i(t)\in\mathbb{R}^d$ denote the state of particle $i$, and consider the following stochastic differential equations (SDEs) that define the evolution of all particles in the system.
	\begin{align}
		&\txtd X_i(t) = f(X_i(t)) \txtd t + \frac{1}{N} \sum_{j=1}^{N} A^{N}_{ij} g(X_i, X_j) \txtd t + \frac{1}{N} \sum_{j=1}^{N} \hat{A}^{N}_{ij} h(X_i, X_j) \txtd B_t^i, \nonumber\\
		&X_i(0)=X_i^0,\quad i = 1, \ldots, N.
		\label{eq1}
	\end{align}
	Here, $A_{i,j}^N$ is the adjacency matrix of the system, representing the basic deterministic network interaction between particles $i$ and $j$, and $\hat{A}^{N}_{ij}$ serves as an adjacency matrix to represent, how interactions between particles $i$ and $j$ influence the noise term. ${A}^{N}_{ij}$ and $\hat{A}^{N}_{ij}$ are not necessarily identical, which already provides a useful generalization to existing mean-fields that aim to include network coupling. The functions $f$, $g$ and $h$ are Lipschitz and bounded. The noise term contains a sequence $\left\lbrace B^i_t \right\rbrace $, for $1 \leq i \leq N$, of independent and identically distributed Brownian motions in $\mathbb{X}\subseteq \mathbb{R}^d$. The initial conditions $\left\lbrace X_i^0 \right\rbrace_{i=1,..,N}$ form a sample of independently and identically distributed random variables with a probability distribution $\mu^{0}\in\mathcal{P}(\mathbb{X})$. $X_i(t)$ represents, at the microscopic level, the trajectory of particle $i$. As the number of particles in the system becomes very large, the study of the system's dynamics becomes increasingly complex. Therefore, to analyze these issues, various techniques have been introduced, focusing on the study of the density of a typical particle and its evolution~\cite{BraunHepp,Dobrushin1979,Neunzert,Sznitman,Golse}. These techniques were classically developed on an all-to-all coupled graphs.

	The main objective of this work is to determine the mean-field limit for the interacting particle system~\eqref{eq1}. The main challenge is that we have to consider also a suitable notion of graph limits and incorporate the graph limit object in our analysis. For dense graphs, it is understood that graphons provide a suitable tool, and one can stay within a familiar setting of functions and norms as graphons are just functions themselves. Yet, to capture sparse and intermediate density graph limits, it is necessary to work with graph limits that are only measures. In this work we show that mean-fields can be obtained on a type of graph representation called diagraph measure (DGM).
	
	The study of the mean-field limit for equations similar to \eqref{eq1} has been approached in different ways, depending on how the adjacency matrices have been handled in the analysis.
	
	First, in cases where the stochastic terms absent, numerous studies explore the mean-field limit by considering $A_{ij}=1$, which are considered mean-field limits for systems of completely exchangeable particles, as studied in e.g.~ \cite{Jabin,BraunHepp,Dobrushin1979,Neunzert}. On the other hand, when considering systems of non-exchangeable particles and accounting for graph properties, the mean-field limit has been derived in various frameworks, e.g., in~\cite{medvedev,MedvedevGeorgi,GkogkasKuehn,Kuehn,Poyato,AyiDuteil,Paul2022}.
Each one employs different graph representation techniques such as graphons \cite{Poyato,MedvedevGeorgi,medvedev}, diagraph measures \cite{Kuehn}, or graphops \cite{GkogkasKuehn}. In all these cases, it is demonstrated that in the limit, the system satisfies a Vlasov equation.
	
	Second, when considering the stochastic terms, different perspectives have been studied. For example, \cite{BreschJabinSoler,Bayraktar,Coppini} have addressed various treatments of the matrix, ranging from $A_{ij}=1$ to considering the graph as a graphon. Here we focus on studying the graph as a diagraph measure (DGM) and derive the resulting Vlasov-Fokker-Plank equation in the stochastic case. To approach this, we will concentrate on the coupling method~\cite{ChaintronReview,ChaintronReview2}, and utilize the techniques of Sznitmann \cite{Sznitman}, along with those used in \cite{Bayraktar,Coppini}, to study the mean-field limit of the particle system \eqref{eq1}.
	
	As our goal is to obtain a mean-field limit, we aim to determine a measure of the form $\mu(t,x)$ representing the mesoscopic/typical particle evolution in time and space of the original microscopic model. Since the SDEs for the particles lead to a stochastic process, the density $\mu$ must be a probability measure. However, as we are working with DGMs, this measure will also depend on an additional variable representing the heterogeneity of the graph, which we denote as $u \in I$, where $I$ is the set of all possible values that the graph variable can take. Our probability measure will take the form $\mu(t,x,u)$, and we will define $\bar{\mu}(t,x)=\int_I \mu(t,x,u)  ~\txtd u$. For simplicity, we will also use the following notation: $\mu(t,x) = \mu_t(x)$ and $\mu(t,x,u) = \mu_{u,t}(x)$.
	
	Our objective is, therefore, to establish, from \eqref{eq1}, a probability measure whose limit converges in a suitable metric space to a (weak) solution of some Vlasov-Fokker-Planck equation. To achieve this, we will represent the law of the entire system through the empirical measure, which will play a central role. The empirical measure is defined as follows:
	$$\mu^N_{t} = \frac{1}{N} \sum_{i=1}^N \delta_{X_i(t)}, \text{ for } t \in [0,T].$$
	
	The next step, once the empirical measure is defined, is to identify a problem with sufficiently good properties, whose solution can be compared with ours. This is the basis of coupling methods. We seek a problem where the dynamics of each particle is suitably independent of the others, and we are going to prove the existence of a solution to this problem. Once the existence results is established, we are then going to show that as the number of particles tends to infinity, both problems exhibit similar behavior. Mathematically, this is expressed by proving that the distances between the empirical measure and a suitable limiting measure tend to zero, as $N\rightarrow\infty$, in some metric space on probability measures that we have to select.
 
    To start, let us introduce the system of independent processes that we are going to consider:
	\begin{equation}
		\begin{split}
			X_u(t) =& X_u(0) + \int_{0}^{t} f(X_u(s)) ~\txtd  s + \int_{0}^t \int_I \int_{\mathbb{X}} g(X_u(s), y) ~\mu_{v, s}(\txtd y)\eta^u(\txtd v) \txtd s \\
			&+ \int_{0}^t \int_I \int_{\mathbb{X}} h(X_u(s), y)~\mu_{v, s}(\txtd y)\hat{\eta}^u(\txtd v) \txtd B_s^u,
		\end{split}
		\label{indep}
	\end{equation}
	where $\mu_{u, t} = \mathcal{L}(X(t)|U = u)$, and $U$ is a uniform random variable on $I$. The initial conditions $X_u(0)=X_u^0$, for $u\in I$, form a sample of independently and identically distributed random variables with a probability distribution $\bar{\mu}^0\in\mathcal{P}(\mathbb{X})$. Note that two new measures appear, which are finite positive Borel measures $\eta,\hat{\eta}\in \mathcal{B}(I, \mathcal{M}_+(I))$. The measures $\eta,\hat{\eta}$ are the fiber measures associated to the adjacency matrices of the graphs $A^N_{i,j}$ and $\hat{A}^N_{i,j}$. The fiber measures are defined on a vertex space $I$. In this way, these measures represents the edges between different $u$ and the rest of the vertices of $I$. This representation of graphs by a measure is known as a diagraph measure (DGM); the concept is a step towards potentially covering even broader classes of graph limits, e.g., those given by graphops (graph operators). For background on graph limits and DGMs we refer to~\cite{BackhauszSzegedy,Kuehn}. The Brownian motion satisfies the same properties as before.
	
	Next, observe in \eqref{indep} that the particle dynamics, for a given $u$, are independent of the dynamics of the other particles. Therefore, the system represents an independent particle process. Furthermore, the graph's heterogeneity variable is taken as a random variable, uniformly distributed over the interval $I$. Similarly to the previous section, we can define the probability measure associated with the solution $X$ as $\bar{\mu} = \int_I \mu_{u,t} \, \textnormal{d}u$. Since the aim was to compare the two systems as the number of particles becomes sufficiently large, we want to determine whether there is a metric space with metric $d$ such that:
	$$d(\bar{\mu}, \mu_N) \rightarrow 0, \text{ as } N \rightarrow \infty.$$
	Since we are dealing with graphs, the graphs $A^N_{i,j}$ and $\hat{A}^N_{i,j}$  will also change with the number of particles, and we must also be able to represent their convergence as $N$ tends to infinity. Moreover, as we are comparing it with the independent system, we must employ a technique of representing matrices/operators in the form of measures and determine a space in which we can establish the convergence.
 
 In this paper, we will focus on proving that if our two graphs, each represented by a directed graph measure (DGM), converge towards certain measures, then under specific assumptions (Assumptions $H$ and $\tilde{H}$ below), the empirical measure describing the dynamics of the states of the particles will converge to a probability measure $\mu_{u,t}$. This limit satisfies the following Vlasov-Fokker-Plank equation:

\begin{equation}
	\begin{split}
		&\partial_t \bar{\mu}_{t} + \partial_x \left(\bar{\mu}_{t} f(x) + \int_I{\mu}_{u,t} \int_I \int_{\mathbb{X}} g(x,y) ~\mu_{v,t}(\text{d}y) \eta^u(\text{d}v)\text{d}u\right) \\
		&+ \frac{1}{2}\partial_x^2\left(\int_I {\mu}_{u,t} \left[\int_I \int_{\mathbb{X}} h(x,y) ~\mu_{v,t}(\text{d}y) \hat{\eta}^u(\text{d}v)\right]^2~\text{d}u\right)=0.
	\end{split}
\end{equation}

This will be proved in Theorems \ref{teorema} and \ref{teorema2}, where in each theorem, we will handle the limit measure $\mu_{u,t}$ differently. However, in both cases, we will obtain the same Vlasov-Fokker-Plank equation.

The paper is organized as follows. In Section 2, we introduce various measure and probability spaces, along with the associated metrics, that will serve as the foundation for establishing the convergence of the empirical measure and of DGMs. Section 3 states the key results and interpretations of the work, along with the necessary assumptions for proving these results. The proof of Theorem \ref{teorema} is presented in Section 4, where we focus on studying the independent process \eqref{indep}. The existence and uniqueness of the solution to this problem will be explored in the same section. In Section 5, we employ coupling methods to illustrate the convergence in law of the empirical measure towards the solution of \eqref{indep}, thereby proving Theorem \ref{teorema}. Section 6 is dedicated to proving Theorem \ref{teorema2} using techniques similar to those employed in the preceding sections. Finally, Section 7 covers examples of Vlasov-Fokker-Plank equations for some DGMs.
	
	\section{Metric Measure Spaces and Digraph Measures}
	
		We are going to introduce various spaces and notation that we will use.
	We are concerned with the dynamics of particles in a finite time interval, denoted as $[0, T]$, where we fix $T>0$ as a parameter. Our analysis takes place within the framework of a filtered probability space $(\Omega, \mathcal{F}, \{F_t\}_{t\in[0,T]}, \mathbb{P})$, where $\{F_t\}$ denotes a filtration that complies with standard conditions. The particle trajectories $X(t)$ take values in $\mathbb{X}\subseteq\mathbb{R}^d$. 
	
	First of all, let us determine the measurement space over which we are going to define our digraphs. Let $I$ be a complete metric space, and consider the space $\mathcal{B}(I, \mathcal{M}_+(I))$, which is the space of bounded measurable functions from $I$ to $\mathcal{M}_+(I)$, where $\mathcal{M}_+(I)$ is the set of all finite Borel positive measures on $I$. Also, consider $\mathcal{C}(I, \mathcal{M}_+(I))$ as the space of continuous functions from $I$ to $\mathcal{M}_+(I$). Let us define $\mathcal{B}(\mathbb{X})$ the space of bounded measurable functions from $\mathbb{X}$ to $\mathbb{R}^d$ and 
   $$L_f:=\sup _{x, y \in \mathbb{X}, x \neq y} \frac{|f(x)-f(y)|}{|x-y|},$$ 
   $\text {the Lipschitz constant of } f \in \mathcal{C}\left(\mathbb{X}\right)$. Let $\mathcal{BL}\left(\mathbb{X}\right) $ be the space of bounded Lipschitz continuous functions and $\mathcal{BL}_1\left(\mathbb{X}\right) =\left\{f \in \mathcal{B} \mathcal{L}\left(\mathbb{X}\right): \mathcal{B} \mathcal{L}(f):=\|f\|_{\infty}+L_f \leq 1\right\} $.
	
	Let $\eta^x, \nu^x \in \mathcal{M}_+(I)$, then we can define the bounded Lipschitz distance as:
	$$
	d_{BL}(\eta^x, \nu^x) := \sup_{f \in \mathcal{BL}_1(\mathbb{X})} \int_\mathbb{X} f(y) ~\left( \eta^x(\txtd y) - \nu^x(\txtd y)\right).
	$$
	Hence, given $\eta, \nu \in \mathcal{B}(I, \mathcal{M}_+(I))$, define the uniform bounded Lipschitz metric:
	$$
	d_\infty(\eta, \nu) := \sup_{x \in \mathbb{X}} d_{BL}(\eta^x, \nu^x).
	$$
 and the bounded Lipschitz norm (on the space of all finite signed Borel measures):
 $$\|\eta\|:= \sup_{x \in \mathbb{X}} \sup_{f \in \mathcal{BL}_1(\mathbb{X})} \int_\mathbb{X} f(y) ~ \eta^x(\txtd y).$$
	We have the following Proposition~\cite[Proposition 2.6]{Kuehn}:
	
	\begin{prop}
		Let $I$ be a complete separable metric space. Assume $I$ is compact. Then $(\mathcal{B}(I, \mathcal{M}_+(I)), d_\infty)$ and $(\mathcal{C}(I, \mathcal{M}_+(I)), d_\infty)$ are complete metric spaces.
	\end{prop}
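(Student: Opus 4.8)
### Proof Proposal

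The plan is to prove completeness of both metric spaces by the standard route: take a Cauchy sequence, produce a candidate limit pointwise (fiber-by-fiber over $I$), and then verify that the convergence is uniform and that the limit lies in the correct space. First I would fix a Cauchy sequence $(\eta_n)$ in $(\mathcal{B}(I, \mathcal{M}_+(I)), d_\infty)$. For each fixed $x \in I$, the inequality $d_{BL}(\eta_n^x, \eta_m^x) \le d_\infty(\eta_n, \eta_m)$ shows that $(\eta_n^x)$ is Cauchy in $(\mathcal{M}_+(I), d_{BL})$. Since $I$ is compact (hence complete separable), the space of finite positive Borel measures on $I$ equipped with the bounded Lipschitz distance is complete — this is the classical fact that $d_{BL}$ metrizes weak convergence on a compact metric space and that $\mathcal{M}_+(I)$ is closed under weak limits (total mass passes to the limit by testing against the constant function $1 \in \mathcal{BL}_1$, and positivity is preserved). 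So for each $x$ there is a limit $\eta^x := \lim_n \eta_n^x \in \mathcal{M}_+(I)$, and this defines a function $\eta : I \to \mathcal{M}_+(I)$.

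Next I would upgrade pointwise convergence to uniform convergence and simultaneously control the limit. Given $\varepsilon > 0$, pick $N$ so that $d_\infty(\eta_n, \eta_m) < \varepsilon$ for $n, m \ge N$; then for every $x$ and every such $n, m$ we have $d_{BL}(\eta_n^x, \eta_m^x) < \varepsilon$, and letting $m \to \infty$ gives $d_{BL}(\eta_n^x, \eta^x) \le \varepsilon$ for all $x$, hence $d_\infty(\eta_n, \eta) \le \varepsilon$. This shows $\eta_n \to \eta$ in $d_\infty$ once we know $\eta \in \mathcal{B}(I, \mathcal{M}_+(I))$, i.e. that $\eta$ is bounded and measurable. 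Boundedness of the total masses is immediate: $\sup_x \eta^x(I) \le \sup_x \eta_N^x(I) + \varepsilon < \infty$ since $\eta_N$ is bounded. For measurability of $x \mapsto \eta^x$ (as a map into $(\mathcal{M}_+(I), d_{BL})$, or equivalently measurability of $x \mapsto \int f \, d\eta^x$ for each bounded measurable $f$), I would use that a uniform (indeed pointwise) limit of measurable functions into a metric space is measurable; each $\eta_n$ is measurable by hypothesis, so $\eta$ inherits measurability. This completes the argument for $\mathcal{B}(I, \mathcal{M}_+(I))$.

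For $(\mathcal{C}(I, \mathcal{M}_+(I)), d_\infty)$ the construction is identical; the only extra point is to check that the limit $\eta$ is continuous rather than merely measurable. But this is the classical fact that a uniform limit of continuous functions into a metric space is continuous: given $x_0 \in I$ and $\varepsilon > 0$, choose $n$ with $d_\infty(\eta_n, \eta) < \varepsilon/3$, then use continuity of $\eta_n$ at $x_0$ to find a neighborhood on which $d_{BL}(\eta_n^x, \eta_n^{x_0}) < \varepsilon/3$, and combine via the triangle inequality for $d_{BL}$ to get $d_{BL}(\eta^x, \eta^{x_0}) < \varepsilon$. Since $\mathcal{C}(I, \mathcal{M}_+(I)) \subseteq \mathcal{B}(I, \mathcal{M}_+(I))$ with the same metric, the Cauchy sequence still converges in $d_\infty$ and the limit now lies in $\mathcal{C}(I, \mathcal{M}_+(I))$.

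The main obstacle — really the only substantive point — is the completeness of $(\mathcal{M}_+(I), d_{BL})$ for compact $I$, on which everything else rests. The delicate part there is ensuring the pointwise (weak) limit of a $d_{BL}$-Cauchy sequence of finite positive measures is again a \emph{finite positive} measure and not something with escaped or created mass; compactness of $I$ is exactly what rules out mass escaping to infinity, and testing against $f \equiv 1$ and against nonnegative $f \in \mathcal{BL}_1$ recovers finiteness of mass and positivity. If one prefers, this can be quoted directly from the literature on the Kantorovich–Rubinstein/bounded-Lipschitz metric (e.g. Bogachev or Dudley), in which case the proof reduces entirely to the two soft "uniform limit preserves boundedness/measurability/continuity" arguments above. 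I would present it in that streamlined form, citing completeness of $(\mathcal{M}_+(I), d_{BL})$ and then giving the short fiberwise argument.
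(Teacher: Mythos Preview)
Your argument is correct and is the standard route to this result: reduce to completeness of the fiber space $(\mathcal{M}_+(I), d_{BL})$, build the limit pointwise, upgrade to uniform convergence, and check the limit stays in the ambient space. The paper, however, does not give its own proof of this proposition at all; it simply quotes it as \cite[Proposition~2.6]{Kuehn}. So there is nothing to compare at the level of method---you have supplied a genuine proof where the paper only cites one. If anything, your write-up is more self-contained than the paper's treatment, and the one substantive ingredient you flag (completeness of $(\mathcal{M}_+(I), d_{BL})$ on a compact $I$) is exactly the point that would need a reference or a short argument in a full proof.
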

	
	In our problem, we will consider $I = [0, 1]$. Since we are working with DGMs, and in our problem we have a graph represented by a matrix, we need to approximate this graph by a DGM. Consider a partition of the interval $I=[0,1]$ as given by $I_i^N=]\frac{i-1}{N},\frac{i}{N}]$, for $1<i\leq N$, and $I_1^N=[0,\frac{1}{N}]$. Let us build the diagraph measures associated with the matrices $A^N_{i,j}$ and $\hat{A}^N_{i,j}$ as follows
	\begin{equation}
		\begin{split}
			\eta^u_{A^N}(v):=\sum_{i=1}^N\textbf{1}_{I_i^N}(u)\sum_{j=1}^N\frac{A^N_{i,j}}{N}\delta_{\frac{j}{N}}(v),\\
			\eta^u_{\hat{A}^N}(v):=\sum_{i=1}^N\textbf{1}_{I_i^N}(u)\sum_{j=1}^N\frac{\hat{A}^N_{i,j}}{N}\delta_{\frac{j}{N}}(v),
		\end{split}
		\label{dgmapprox}
	\end{equation}
where $\textbf{1}_{\frac{j}{N}}(x)$ is the indicator function and ${\frac{j}{N}}$ serves as the representative of the set $I_j^N$ and  . We recall that the sequence of graphs $\{A^N\}_{N\geq1}$ converges to the diagraph measure $\eta$ if and only if $d_\infty(\eta_{A^N},\eta)\rightarrow 0$, as $N\rightarrow\infty$. Similarly, we aim for $\hat{A}^N$ to converge to the diagraph measure $\hat{\eta}$. Observe that $\eta_{A^N}\in \mathcal{B}\left(I, \mathcal{M}_{+}(I)\right)\bigcap\mathcal{C}\left(I, \mathcal{M}_{+}(I)\right)$.
	
In order to analyze the collection of probability laws, let us consider the following space of probability measures
	$$\mathcal{N}:=\left\lbrace \nu\in \mathcal{P}(I\times\mathcal{C}([0,T],\mathbb{X})): \pi_1 \circ \nu=\lambda \right\rbrace,$$ 
	where $\lambda$ denotes the Lebesgue measure on $I$ and $\pi_1$ is the projection map associated to the first coordinate.
	To compare probability measures we make use of the following Wasserstein-2 metric:
	$$W_2(\mu,\nu):=\left( \inf\left\lbrace \mathbb{E}|X-\tilde{X}|^2: \mathcal{L}(X)=\mu, \mathcal{L}(\tilde{X})=\nu\right\rbrace\right) ^{\frac{1}{2}},\;\mu,\nu\in \mathcal{P}(\mathbb{R}^d).$$
	As the probability measures also depend on time, we can define the following Wasserstein-2 metric:
 \begin{align*}
W_{2,t}(\mu,\nu) := & \left( \inf \left\lbrace \mathbb{E}\|X-\tilde{X}\|_{*,t}^2 : \mathcal{L}(X) = \mu, \mathcal{L}(\tilde{X}) = \nu \right\rbrace \right)^{\frac{1}{2}}, \\
& t \in [0,T], \quad \mu, \nu \in \mathcal{P}(\mathcal{C}([0,T],\mathbb{X})),
\end{align*}
	where $\|X\|_{*,t}:=\sup_{0\leq s\leq t} |X_s|$ for $X\in\mathcal{C}([0,T],\mathbb{X})$, $t\in[0,T]$. Additionally, as we are also working with diagraph measures, we must also measure distances taking into account the graph's heterogeneity variable. For this purpose, we have the following two measures:
		$$W^{\mathcal{N},2}_{2,t}(\mu,\nu):=\left(\int_I \left[ W_{2,t}(\mu_u,\nu_u) \right] ^2~\txtd u\right)^{\frac{1}{2}}, \;t\in[0,T],\;\mu,\nu\in \mathcal{N},$$
		$$W^{\mathcal{N},\infty}_{2,t}(\mu,\nu):=\sup_{u\in I} W_{2,t}(\mu_u,\nu_u) , \;t\in[0,T],\;\mu,\nu\in \mathcal{N}.$$
		Since $I=[0,1]$, the following inequality holds:
		\[ W^{\mathcal{N},2}_{2,t}(\mu,\nu)\leq W^{\mathcal{N},\infty}_{2,t}(\mu,\nu).
		\]
	We will work in the space $\mathcal{N}$ which, equipped with the distance $W^{\mathcal{N},2}_{2,t}(\mu,\nu)$, is a complete metric space.	
	
	\begin{remark}
		Notice that the Wasserstein metric can be compared with the bounded Lipschitz metric, denoted as $d_{\mathcal{BL}}$, because the following inequality holds:
		$$W_{2,t}(\mu_u,\nu_u)\geq\sup_{f\in\mathcal{BL}_1}\left| \int_{\mathbb{R}^d}f(x)  ~\mu_{u,t}(\txtd x)-\int_{\mathbb{R}^d}f(x)  ~\nu_{u,t}(\txtd x)\right|  ,\;\mu,\nu\in \mathcal{N},$$
		i.e. $d_{\mathcal{BL}}(\mu_{u,t},\nu_{u,t})\leq W_{2,t}(\mu_u,\nu_u)$ (see \cite{Gibbs}).	
		\label{warsineq}
	\end{remark}

\section{Main Results}

Before stating our results, we introduce our main assumptions.

\paragraph{Assumptions ($H$)} 
Let us assume that $f$, $h$ and $g$ are bounded and Lipschitz continuous functions. We denote $B_f = \sup_{x \in \mathbb{X}} |f(x)|$, $B_h = \sup_{(x, y) \in \mathbb{X}^2} |h(x,y)|$ and $B_g = \sup_{(x, y) \in \mathbb{X}^2} |g(x, y)|$, and $|f(x) - f(y)| \leq L_f |x - y|$ holds for $x, y \in \mathbb{X}$, $|h(x_1, y_1) - h(x_2, y_2)| \leq L_h (|x_1 - x_2| + |y_1 - y_2|)$ and $|g(x_1, y_1) - g(x_2, y_2)| \leq L_g (|x_1 - x_2| + |y_1 - y_2|)$ for $(x_i, y_i) \in \mathbb{X}^2$, where $L_f$, $L_h$ and $L_g$ are the respective Lipschitz constants.

We remark that the Lipschitz assumptions are in many systems not a severe restriction, e.g., if the system is globally dissipative, one may use cut-off arguments to make sufficiently smooth nonlinearities Lipschitz in all parts of phase space relevant for the long-term dynamics. Of course, there are situations, where Lipschitz assumptions fail, e.g., for singular interaction terms but we are not going to pursue this direction here.

Let $[0, T]$ be the time domain for the system dynamics for some $T > 0$. Consider a filtered probability space $(\Omega, \mathcal{F}, \{F_t\}_{t\in[0,T]}, \mathbb{P})$, where $\{F_t\}$ denotes a filtration that complies with standard conditions. The particle trajectories $X(t)$ take values in $\mathbb{X}$. The random variable $U$ is given on a compact Polish probability space $(I,\mathfrak{B}(I),\mu_I)$, which we select as $I=[0,1]$ with the usual Lebesgue measure here but point out that more general choices might turn out to be useful as well. Indeed, recall that $I$ will be space to track the node labels upon passing to the mean-field limit, so choosing a more geometric space for $(I,\mathfrak{B}(I),\mu_I)$ could track distances on the graph but here we work measure-theoretically so the Borel-Maraham theorem guarantees that no important measure-theoretic information is lost by considering $I=[0,1]$. Regarding the assumptions about the graph, we assume that $\sup_{1\leq i\leq N}\sum_{j=1}^N(A^N_{i,j})^2=\mathcal{O}(N)$ and $\sup_{1\leq i\leq N}\sum_{j=1}^N(\hat{A}^N_{i,j})^2=\mathcal{O}(N)$ as $N\rightarrow \infty$. 

Finally, we need to impose conditions on the initial data. We assume that the initial data ${X_i(0)}$, for $1\leq i\leq N$, are independent and identically distributed, and we have finite second moments initially $\mathbb{E}|X_i(0)^2|<+\infty$, for all $i\in\{1,2,\ldots,N\}$. Moreover, assume that 
$$\lim_{N\rightarrow \infty}\frac{1}{N}\sum_{i=1}^{N}\mathbb{E}|X_i^{0}-X_{\frac{i}{N}}^0|^2= 0,$$ 
for any random variable $X_i^{0}$ and $X^0_{\frac{i}{N}}$, for $i=1,\ldots,N$, where $\mu^{0}=\mathcal{L}(X_i^{0})$ and $\bar{\mu}^{0}=\mathcal{L}(X_{\frac{i}{N}}^0)$, for $i=1,\ldots,N$. 

After establishing the key assumptions of our problem \eqref{eq1}, we proceed to present our main theorem, where we establish the convergence in law of the empirical measure towards a probability measure that satisfies the Vlasov-Fokker-Planck equation. The proof of this theorem will be provided in Section \ref{secTheo1}.

\begin{theorem}
	Under the assumptions ($H$), let us consider sequences of graphs $\{A_{i,j}^N\}_{N\geq1}$ and $\{\hat{A}_{i,j}^N\}_{N\geq1}$. Assume that there exists $\eta$ and $\hat{\eta}$ (DGM) such that $\eta_{A^N}$ converges to $\eta$ and $\eta_{\hat{A}^N}$ converges to $\hat{\eta}$. In this case, the empirical measure $\mu_N$ converges in probability to a measure $\bar{\mu} \in \mathcal{P}(C([0,T],\mathbb{X}))$, where $\bar{\mu}$ represents the law of the solution $X$ of equation \eqref{indep}. And  $\bar{\mu}$ solves the Vlasov-Fokker-Plank equation in the weak sense
	\begin{equation}
		\begin{split}
			\partial_t \bar{\mu}_{t} &+ \partial_x \left(\bar{\mu}_{t} f(x) + \int_I{\mu}_{u,t} \int_I \int_{\mathbb{X}} g(x,y)  ~\mu_{v,t}(\txtd y)  \eta^u(\txtd v)\txtd u\right)=\\
			&+\frac{1}{2}\partial^2_x\left( \int_I {\mu}_{u,t} \left[ \int_I \int_{\mathbb{X}} h(x,y)  ~\mu_{v,t}(\txtd y)  \hat{\eta}^u(\txtd v)\right] ^2~\txtd u\right). 
		\end{split}
		\label{FP}
	\end{equation}
	\label{teorema}
\end{theorem}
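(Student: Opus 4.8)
The plan is the classical three-stage route: first build the self-consistent process \eqref{indep} by a fixed-point argument, then couple the true particle system \eqref{eq1} to $N$ independent copies of \eqref{indep} and show the coupling error vanishes, and finally read off \eqref{FP} from It\^o's formula. For the first stage I would fix a candidate flow of fibred laws $\mu=(\mu_{u,t})_{u\in I,\,t\in[0,T]}\in\mathcal N$, substitute it into the coefficients of \eqref{indep} — which then become genuine measurable functions of $(s,X_u(s))$, Lipschitz with constants built from $L_f,L_g,L_h$ and the uniformly bounded fibre masses $\|\eta\|,\|\hat\eta\|$ — solve the resulting standard SDE for each $u$, and let $\Phi(\mu)_{u,t}$ be the law of its solution at time $t$. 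Using the bounds in ($H$), Doob's maximal inequality and It\^o's isometry for the $\txtd B^u$ term, Cauchy--Schwarz against the finite measures $\eta^u,\hat\eta^u$ (so that $\int_I d_{BL}(\mu_{v,s},\nu_{v,s})\,\eta^u(\txtd v)\le\|\eta\|\,W^{\mathcal N,\infty}_{2,s}(\mu,\nu)$) and Remark \ref{warsineq}, one shows $\Phi$ is a contraction on $(\mathcal N,W^{\mathcal N,\infty}_{2,T})$ for small $T$ and iterates over $[0,T]$; a parallel Gr\"onwall argument using the boundedness of $f,g,h$ and $\mathbb E|X_u(0)|^2<\infty$ gives $\sup_u\mathbb E\|X_u\|_{*,t}^2<\infty$. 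The fixed point is the desired solution $X$, with $\bar\mu=\int_I\mu_{u,\cdot}\,\txtd u$.

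For the coupling stage, let $Y_i$ solve \eqref{indep} with $u=i/N$, initial value $X^0_{i/N}$ (coupled to $X_i^0$ as in ($H$)), and driven by the same $B^i$. I would estimate $\mathbb E\|X_i-Y_i\|_{*,t}^2$ by splitting the drift and diffusion increments into three parts: (a) a Lipschitz part controlled by Gr\"onwall, which needs $\sup_i\tfrac1N\sum_j A^N_{ij}=\mathcal O(1)$ (this follows from $\sup_i\sum_j(A^N_{ij})^2=\mathcal O(N)$ by Cauchy--Schwarz), and likewise for $\hat A^N$; (b) a law-of-large-numbers part $\tfrac1N\sum_j A^N_{ij}\big(g(Y_i,Y_j)-\mathbb E[g(Y_i,Y_j)\mid Y_i]\big)$ whose second moment is $\lesssim\tfrac1{N^2}\sum_j(A^N_{ij})^2B_g^2=\mathcal O(1/N)$ by conditional independence of the $Y_j$ (and the same for the $h$-term after It\^o's isometry); and (c) a graph-approximation part that, since $\eta^{i/N}_{A^N}$ integrates $v\mapsto\int_{\mathbb X}g(Y_i,y)\mu_{v,s}(\txtd y)$ exactly onto $\tfrac1N\sum_j A^N_{ij}\int_{\mathbb X}g(Y_i,y)\mu_{j/N,s}(\txtd y)$, collapses to $\|F_s\|_{\mathcal{BL}(I)}\,d_\infty(\eta_{A^N},\eta)$ with $F_s(v)=\int_{\mathbb X}g(Y_i,y)\mu_{v,s}(\txtd y)$, and analogously $d_\infty(\eta_{\hat A^N},\hat\eta)$. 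Gr\"onwall then yields $\sup_i\mathbb E\|X_i-Y_i\|_{*,T}^2\to0$ (or its $\tfrac1N\sum_i$ average, matching the initial-data hypothesis). Since $W_{2,T}(\tfrac1N\sum_i\delta_{X_i},\tfrac1N\sum_i\delta_{Y_i})^2\le\tfrac1N\sum_i\|X_i-Y_i\|_{*,T}^2$, and, by a law of large numbers for the independent (non-identically distributed) $Y_i$ together with continuity of $u\mapsto\mu_{u,\cdot}$ identifying the Riemann sum $\tfrac1N\sum_i\mu_{i/N,\cdot}$ with $\bar\mu$, one has $\tfrac1N\sum_i\delta_{Y_i}\to\bar\mu$, this gives convergence in probability of $\mu_N$ to $\bar\mu$.

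For the last stage I would apply It\^o's formula to $\phi(X_u(t))$ for $\phi\in C_b^2(\mathbb X)$, take expectations to remove the martingale term, differentiate in $t$, and integrate over $u\in I$; the drift contributes $\int_I\!\int_{\mathbb X}\phi'(x)\big(f(x)+\int_I\!\int_{\mathbb X}g(x,y)\mu_{v,t}(\txtd y)\eta^u(\txtd v)\big)\mu_{u,t}(\txtd x)\,\txtd u$ and the quadratic variation contributes $\tfrac12\int_I\!\int_{\mathbb X}\phi''(x)\big(\int_I\!\int_{\mathbb X}h(x,y)\mu_{v,t}(\txtd y)\hat\eta^u(\txtd v)\big)^2\mu_{u,t}(\txtd x)\,\txtd u$, which is exactly the weak form of \eqref{FP}.

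The hard part will be the graph-approximation term (c) in the coupling stage: bounding $\int_I F_s(v)\,(\eta^{i/N}-\eta^{i/N}_{A^N})(\txtd v)$ by $d_\infty(\eta_{A^N},\eta)$ requires $F_s$ to have finite bounded-Lipschitz norm on $I$ uniformly in $i,s,N$, i.e.\ a quantitative modulus of continuity for $u\mapsto\mu_{u,s}$ in a Wasserstein sense. This is not automatic, since the limiting DGMs $\eta,\hat\eta$ are only assumed to lie in $\mathcal B(I,\mathcal M_+(I))$; it is precisely here that the treatment of $\mu_{u,t}$ must be pinned down (and where Theorems \ref{teorema} and \ref{teorema2} will differ), either by running the Step-1 fixed point inside a class of flows carrying such a modulus — inherited from a regularity assumption on $\eta,\hat\eta$ — or by regularizing in the node variable before taking $N\to\infty$.
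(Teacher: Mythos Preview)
Your three-stage plan (fixed-point for \eqref{indep}, synchronous coupling $X_i\leftrightarrow X_{i/N}$ with the same Brownian motion, It\^o's formula for \eqref{FP}) is exactly the route the paper takes; even your decomposition (a)/(b)/(c) of the coupling error is the paper's $\mathrm{E}_g^1/\mathrm{E}_g^2/\mathrm{E}_g^3$ (and analogously for $h$), handled respectively by Lipschitz+Gr\"onwall, conditional independence of the $X_{j/N}$ plus $\sup_i\sum_j (A_{ij}^N)^2=\mathcal O(N)$, and the $d_\infty$-distance of DGMs. The fixed-point is done via the Picard/factorial bound $W^{\mathcal N,\infty}_{2,T}(\mathcal F^{n+1}\nu,\mathcal F^n\nu)\le \tfrac{(CT)^n}{n!}W^{\mathcal N,\infty}_{2,T}(\mathcal F\nu,\nu)$ rather than a small-time contraction, but this is cosmetically the same argument.

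Your final paragraph is the honest part of the comparison. The paper's treatment of your ``hard'' term (c) is simply the one-line bound $\mathrm{E}_g^3 \le 48\,T\,B_g^2\,[d_\infty(\eta_{A^N},\eta)]^2$, justified only by ``$g$ is Lipschitz and bounded''; it does \emph{not} supply the missing ingredient you isolate, namely a uniform bounded-Lipschitz estimate on $I\ni v\mapsto \int_{\mathbb X} g(X_{i/N}(s),y)\,\mu_{v,s}(\txtd y)$. So the paper's proof of Theorem~\ref{teorema} does not resolve your worry --- it asserts the bound without the $v$-regularity argument, and the continuity-in-$u$ proposition (Proposition~\ref{propdgm2}) that would close this is proved only in the Theorem~\ref{teorema2} setting under the extra hypotheses~($\tilde H$). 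Likewise, in the final passage from the coupling estimate to convergence in probability, the paper writes the bound $E_u\times\mathbb E\big|\int f\,\txtd\mu_N-\int f\,\txtd\bar\mu\big|^2\le \tfrac1N\sum_i E_u\times\mathbb E|f(X_i)-f(X_{i/N})|^2$ directly, rather than going through the LLN-plus-Riemann-sum step you sketch; your version is more explicit about what is actually being used. In short: your plan matches the paper, and the gap you flag is one the paper leaves open as well.
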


Note that when studying this problem, we have assumed that the heterogeneity variable of the graph is a random variable given by a uniform distribution. We can modify this hypothesis and obtain better properties regarding the law $\mu_{u,t}$.

As we did before, let us consider the following system of independent processes:
\begin{equation}
	\begin{split}
		X_u(t) =& X_u(0) + \int_{0}^{t} f(X_u(s)) ~\txtd s + \int_{0}^t \int_I \int_{\mathbb{X}} g(X_u(s), y)  ~\mu_{v, s}(\txtd y) \eta_u(\txtd v)\txtd s \\
		&+ \int_{0}^t \int_I \int_{\mathbb{X}} h(X_u(s), y)~\mu_{v, s}(\txtd y)\hat{\eta}^u(\txtd v) \txtd B_s^u,
	\end{split}
	\label{indep2}
\end{equation}
where $\mu_{u, t} = \mathcal{L}(X_u(t))$, for $u\in I$. Since we do not take $u$ as a random variable, we need to modify the way we understand the law of $X_u(t)$ for each $u$ and impose more properties on this law. To do this, we will modify our space, and instead of working in $\mathcal{N}$, we will work in the following space:
\begin{align*}
	\tilde{\mathcal{N}}:=\left\lbrace \nu=(\mu_u,u\in I)\in (\mathcal{P}(\mathcal{C}([0,T],\mathbb{X})))^I: u\rightarrow\mu_u \text{ is measurable},\right. \\
	\left.\sup _{u \in I} \int_{\mathcal{C}_d}\|x\|_{*, T}^2 ~\mu_u(\txtd  x)<\infty\right\rbrace.
\end{align*}
This space equipped with the distance $W^{\mathcal{N},\infty}_{2,t}$ is a complete metric space.	

In addition to the assumptions ($H$), we must establish one more set of assumptions:

\paragraph{Assumptions ($\tilde{H}$)} Let us assume that the law of probability $\bar{\mu}_0$ of the initial condition is measurable with respect to the heterogeneity variable of the graph. In other words, the mapping $u\in I\rightarrow\bar{\mu}^0_u\in P(\mathbb{X})$ is measurable. Moreover, assume that $W_2\left(\bar{\mu}_{u_1}^0, \bar{\mu}_{u_2}^0\right) \leq \alpha\left|u_1-u_2\right|$, for $u_1,u_2\in I$ and $\alpha\in\mathbb{R}_+$.

With the additional assumptions about our system, we obtain the following result:

\begin{theorem}
Under the assumptions (H) and ($\tilde{H}$), let us consider a sequence of graphs $\{A_{i,j}^N\}_{N\geq1}$ and $\{\hat{A}_{i,j}^N\}_{N\geq1}$. Assume that there exists $\eta$ and $\hat{\eta}$ (DGM) such that $\eta_{A^N}$ converges to $\eta$ and $\eta_{\hat{A}^N}$ converges to $\hat{\eta}$. In this case, the empirical measure $\mu_N$ converges in probability  in $\mathcal{P}(C([0,T],\mathbb{X}))$ to the measure $\bar{\mu}$. And  $\bar{\mu}$ solves the Vlasov-Fokker-Plank equation in the weak sense
\begin{equation}
	\begin{split}
		\partial_t \bar{\mu}_{t} &+ \partial_x \left(\bar{\mu}_{t} f(x) + \int_I{\mu}_{u,t} \int_I \int_{\mathbb{X}} g(x,y) ~ \mu_{v,t}( \txtd y) \eta^u( \txtd v)\txtd u\right)=\\
		&+\frac{1}{2}\partial^2_x\left( \int_I {\mu}_{u,t} \left[ \int_I \int_{\mathbb{X}} h(x,y) ~\mu_{v,t}( \txtd y) \hat{\eta}^u( \txtd v)\right] ^2 ~\txtd u\right). 
	\end{split}
	\label{FP2}
\end{equation}
\label{teorema2}
\end{theorem}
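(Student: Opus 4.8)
The plan is to follow the same coupling strategy used for Theorem \ref{teorema}, but replace every occurrence of the ``random heterogeneity variable'' argument by its deterministic, fiberwise analogue in the space $\tilde{\mathcal{N}}$ equipped with $W^{\mathcal{N},\infty}_{2,t}$. First I would establish \emph{existence and uniqueness} of a solution to the independent system \eqref{indep2} in $\tilde{\mathcal{N}}$. This is done by a Banach fixed-point argument: define the map $\Phi$ sending a candidate family $(\mu_u)_{u\in I}$ to the law of the solution of the SDE \eqref{indep2} with that family plugged into the drift and diffusion coefficients. Using the boundedness and Lipschitz assumptions ($H$) on $f,g,h$, together with the finiteness $\sup_u\int\|x\|_{*,T}^2\,\mu_u(\txtd x)<\infty$, one shows via Burkholder--Davis--Gundy and Gronwall that $\Phi$ maps $\tilde{\mathcal{N}}$ into itself and is a contraction on a short time interval in the $W^{\mathcal{N},\infty}_{2,t}$ metric; iterating over $[0,T]$ gives a unique fixed point. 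The point where Assumptions ($\tilde{H}$) enter is the \emph{regularity in $u$}: because $u\mapsto\bar\mu^0_u$ is measurable with $W_2(\bar\mu^0_{u_1},\bar\mu^0_{u_2})\le\alpha|u_1-u_2|$, and because $\eta^u,\hat\eta^u$ depend measurably on $u$, the fixed point $\mu_{u,t}$ inherits measurability (and a quantitative modulus) in $u$, so it genuinely lives in $\tilde{\mathcal{N}}$ rather than merely in $\mathcal{N}$.

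Next I would run the coupling argument. Build, on a common probability space, the particle system \eqref{eq1} driven by Brownian motions $B^i$ and, for each $i$, a companion process $X_{i/N}$ solving \eqref{indep2} with the same Brownian motion $B^i$ and the same initial datum reshuffled from $\mu^0$ to $\bar\mu^0$ (here the last hypothesis of ($H$), $\frac1N\sum_i\mathbb{E}|X_i^0-X^0_{i/N}|^2\to0$, is used to control the initial discrepancy). Writing $D_N(t):=\frac1N\sum_{i=1}^N\mathbb{E}\|X_i-X_{i/N}\|_{*,t}^2$, subtract the two integral equations, apply BDG to the stochastic term and Cauchy--Schwarz to the drift term, and split the difference of the interaction integrals into three pieces: (i) the Lipschitz-in-$X_i$ part, bounded by $L_g,L_h$ times $D_N$; (ii) the difference between the empirical average $\frac1N\sum_j A^N_{ij}(\cdots)$ and the integral against the approximating DGM $\eta_{A^N}$, which is exact by construction \eqref{dgmapprox}; and (iii) the difference between integrating against $\eta_{A^N}$ versus the limit $\eta$, controlled by $d_\infty(\eta_{A^N},\eta)\to0$ via Remark \ref{warsineq}, plus the error from replacing $\mu_{v,s}$ by the empirical law, which is handled by a law-of-large-numbers / propagation-of-chaos estimate for the conditionally i.i.d. family $\{X_{i/N}\}$. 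The growth-bound hypothesis $\sup_i\sum_j (A^N_{ij})^2=\mathcal{O}(N)$ is exactly what makes the variance of these empirical sums $\mathcal{O}(1/N)$. Gronwall on $D_N(t)$ then yields $D_N(T)\to0$.

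Finally, from $D_N(T)\to0$ I would deduce convergence in probability of the empirical measure $\mu_N$ to $\bar\mu=\int_I\mu_{u,\cdot}\,\txtd u$ in $\mathcal{P}(\mathcal{C}([0,T],\mathbb{X}))$: the $W_2$-distance between $\mu_N$ and the empirical measure of the companion processes is bounded by $D_N$, and the latter empirical measure converges to $\bar\mu$ by the standard Glivenko--Cantelli/de Finetti argument for exchangeable-in-blocks systems. To identify the limit as a weak solution of \eqref{FP2}, apply Itô's formula to $\varphi(X_u(t))$ for $\varphi\in C_c^\infty$, take expectations, integrate over $u\in I$, and recognize the resulting identity as the weak formulation of the Vlasov--Fokker--Planck equation; the second-order term arises precisely from the quadratic variation $\big[\int_I\int_{\mathbb{X}}h(x,y)\mu_{v,t}(\txtd y)\hat\eta^u(\txtd v)\big]^2$.

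The main obstacle I expect is step (iii) in the coupling estimate: cleanly separating the ``graph approximation error'' $d_\infty(\eta_{A^N},\eta)$ from the ``mean-field fluctuation error'' $W_2(\text{empirical},\mu_{v,s})$, while keeping all constants uniform in $u\in I$ — uniformity in $u$ is essential here because we work with $W^{\mathcal{N},\infty}_{2,t}$ rather than the $L^2$-in-$u$ metric, so any estimate that is only good on average over $u$ is insufficient. This is where the Lipschitz-in-$u$ regularity granted by ($\tilde{H}$), propagated through the fixed-point construction, does the essential work.
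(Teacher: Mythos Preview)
Your overall architecture --- fixed-point well-posedness in $\tilde{\mathcal{N}}$, a coupling estimate $D_N(T)\to 0$, and identification of the limit via It\^o's formula --- is the route the paper takes. However, you misplace the role of Assumption~($\tilde H$), and your final step has a gap.

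The coupling estimate $D_N(t)=\frac{1}{N}\sum_i\mathbb{E}\|X_i-X_{i/N}\|_{*,t}^2$ is \emph{averaged} over $i$, and the paper's bound (Proposition~\ref{compare2}) is literally identical to the one proved for Theorem~\ref{teorema}; no uniformity in $u$ is needed there, and~($\tilde H$) plays no role in that step. Your concern that ``any estimate that is only good on average over $u$ is insufficient'' is misplaced: the metric $W^{\mathcal{N},\infty}_{2,t}$ is used only for the well-posedness fixed point in $\tilde{\mathcal{N}}$, not for the coupling bound itself.

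The actual gap is your passage from $D_N\to 0$ to $\mu_N\to\bar\mu$. The companion processes $\{X_{i/N}\}_{i\le N}$ are independent but \emph{not} identically distributed or exchangeable --- each carries its own label $i/N$ and hence its own law $\mu_{i/N}$ --- so a Glivenko--Cantelli or de~Finetti argument does not apply as stated. The paper instead tests against bounded Lipschitz $k$ and splits $\mathbb{E}\bigl|\int k\,\txtd\mu_N-\int k\,\txtd\bar\mu\bigr|^2$ into three pieces: (a) $\frac{1}{N}\sum_i\mathbb{E}|k(X_i)-k(X_{i/N})|^2$, controlled by $D_N$; (b) the variance $\mathbb{E}\bigl|\frac{1}{N}\sum_i\bigl(k(X_{i/N})-\mathbb{E}k(X_{i/N})\bigr)\bigr|^2$, which is $O(1/N)$ by independence and boundedness of $k$; and (c) the deterministic Riemann-sum error $\bigl|\frac{1}{N}\sum_i\int k\,\txtd\mu_{i/N}-\int_I\int k\,\txtd\mu_u\,\txtd u\bigr|$. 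It is term~(c), not the coupling estimate, where the Lipschitz part of~($\tilde H$) is used: the paper establishes a separate continuity result (Proposition~\ref{propdgm2}) of the form
\[
\bigl[W_{2,T}(\mu_{u_1},\mu_{u_2})\bigr]^2\le C\Bigl(\bigl[W_2(\bar\mu^0_{u_1},\bar\mu^0_{u_2})\bigr]^2+d_{BL}^2(\eta^{u_1},\eta^{u_2})+d_{BL}^2(\hat\eta^{u_1},\hat\eta^{u_2})\Bigr),
\]
and the hypothesis $W_2(\bar\mu^0_{u_1},\bar\mu^0_{u_2})\le\alpha|u_1-u_2|$ is what turns this into a genuine modulus of continuity in $u$, making the Riemann sum converge. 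So your instinct that~($\tilde H$) propagates a ``quantitative modulus'' through the construction is right, but its payoff is in term~(c), not in the coupling inequality.
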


Observing that in both theorems, we obtain similar results regarding the probability $\bar{\mu}$ that is, in both, we demonstrate that our empirical measure converges to it in probability and satisfies the Vlasov-Fokker-Planck equation. However, in each case, the law represents a different concept.

In the first case, $\mu_{u,t}(x)$ belongs to the space $\mathcal{P}(I\times C([0,t],\mathbb{X}))$, so we are considering that our probability measure determines the probability of finding a particle at position $x$ with heterogeneity variable $u$, for each time instant $t\in [0,T]$. In contrast, in the second case, for each fixed $u\in I$, the probability measure $\mu_{u,t}(x)$ determines the probability of finding the particle at position $x$.

These different ways of understanding each probability measure lead us to observe $\bar{\mu}$ in different ways: In the first case, it is understood as a marginal as we integrate over all possible values that $u$ can take, giving us $\bar{\mu}$ as the probability law of $X$. Meanwhile, in the second case, since we have a probability law for each $u$, by integrating with respect to $u$, we are measuring the average probability of finding particles at position $X$.

	\section{Existence of Solutions}\label{secIndep}
	First of all, we are going to prove the existence and uniqueness of solution of a solution to \eqref{indep}.
 
	\begin{prop}\label{propindep}
		Under the assumptions ($H$), for every random variable U on I there exists a unique solution to \eqref{indep}.
	\end{prop}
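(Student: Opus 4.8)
The plan is to establish existence and uniqueness for \eqref{indep} via a Picard-type fixed-point argument on the space $\mathcal{N}$ (or on the family of conditional laws $(\mu_{u,t})_{u\in I}$) equipped with the metric $W^{\mathcal{N},2}_{2,t}$, exploiting its completeness together with the standard contraction trick of weighting time by a large constant (or iterating on short time intervals and patching). First I would fix a candidate flow of conditional measures $\nu=(\nu_{u,t})$ with the prescribed property $\pi_1\circ\nu=\lambda$ and finite second moments, and consider the SDE
\begin{equation*}
	\begin{split}
		X_u(t) =& X_u(0) + \int_{0}^{t} f(X_u(s))\,\txtd s + \int_{0}^t \int_I \int_{\mathbb{X}} g(X_u(s), y)\,\nu_{v,s}(\txtd y)\,\eta^u(\txtd v)\,\txtd s \\
		&+ \int_{0}^t \int_I \int_{\mathbb{X}} h(X_u(s), y)\,\nu_{v,s}(\txtd y)\,\hat{\eta}^u(\txtd v)\,\txtd B_s^u,
	\end{split}
\end{equation*}
for each fixed $u$. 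Because $\eta^u,\hat{\eta}^u$ are finite measures with total mass bounded uniformly in $u$ (they lie in $\mathcal{M}_+(I)$ and $\eta,\hat\eta\in\mathcal{B}(I,\mathcal{M}_+(I))$ are bounded), the averaged drift $b(x,u)=f(x)+\int_I\int_{\mathbb{X}} g(x,y)\,\nu_{v,s}(\txtd y)\,\eta^u(\txtd v)$ and averaged diffusion coefficient $\sigma(x,u)=\int_I\int_{\mathbb{X}} h(x,y)\,\nu_{v,s}(\txtd y)\,\hat{\eta}^u(\txtd v)$ are globally Lipschitz in $x$ (with constants controlled by $L_g\|\eta\|$, $L_h\|\hat\eta\|$) and bounded (by $B_f+B_g\|\eta\|$, $B_h\|\hat\eta\|$), uniformly in $u$ and $s$. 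Hence classical SDE theory (e.g.\ the Itô existence–uniqueness theorem, using the given filtered probability space and the Brownian motions $B^u$) yields, for each $u$, a unique strong solution $X_u^\nu$ with $\mathbb{E}\|X_u^\nu\|_{*,T}^2<\infty$, the bound being uniform in $u$ by Gronwall and the Burkholder–Davis–Gundy inequality applied with the uniform coefficient bounds; measurability in $u$ follows since the coefficients depend measurably on $u$.

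Next I would define the map $\Phi:\mathcal{N}\to\mathcal{N}$ sending $\nu$ to the law of $(U, X^\nu_U)$, i.e.\ $\Phi(\nu)_{u,\cdot}=\mathcal{L}(X^\nu_u)$; one checks $\Phi(\nu)\in\mathcal{N}$ because the $U$-marginal is $\lambda$ by construction and the second-moment bound just established is uniform in $u$, hence integrable over $I$. The core estimate is the contraction property: given $\nu,\tilde\nu\in\mathcal{N}$, couple $X_u^\nu$ and $X_u^{\tilde\nu}$ through the same Brownian motion and the same initial datum, subtract the two integral equations, take $\sup_{s\le t}$, square, and apply BDG to the stochastic term. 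The drift difference splits into a Lipschitz-in-$X$ part, controlled by $L_f\int_0^t\mathbb{E}\|X^\nu_u-X^{\tilde\nu}_u\|_{*,s}^2\,\txtd s$ plus $L_g\|\eta\|$ times the same, and a measure-difference part $\int_I\int_{\mathbb{X}} g(X^{\tilde\nu}_u(s),y)\,(\nu_{v,s}-\tilde\nu_{v,s})(\txtd y)\,\eta^u(\txtd v)$; the latter is bounded, after using that $g$ is bounded Lipschitz in $y$ and Remark~\ref{warsineq}, by $L_g\int_I d_{\mathcal{BL}}(\nu_{v,s},\tilde\nu_{v,s})\,\eta^u(\txtd v)\le L_g\|\eta\|\,\big(\int_I W_{2,s}(\nu_v,\tilde\nu_v)^2\,\txtd v\big)^{1/2}$ by Cauchy–Schwarz, which is exactly $L_g\|\eta\|\,W^{\mathcal{N},2}_{2,s}(\nu,\tilde\nu)$. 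The diffusion term is handled identically with $h$, $\hat\eta$, $L_h$, using $|\sigma(x,u)-\tilde\sigma(x,u)|^2\le 2(\text{Lipschitz-in-}x)^2+2(\text{measure difference})^2$ and that $a\mapsto a^2$ is locally Lipschitz on the bounded range of $\sigma$. Collecting terms gives
\begin{equation*}
	\mathbb{E}\|X^\nu_u-X^{\tilde\nu}_u\|_{*,t}^2 \le C\int_0^t \mathbb{E}\|X^\nu_u-X^{\tilde\nu}_u\|_{*,s}^2\,\txtd s + C\int_0^t W^{\mathcal{N},2}_{2,s}(\nu,\tilde\nu)^2\,\txtd s,
\end{equation*}
with $C$ independent of $u$; integrating over $u\in I$, applying Gronwall, and then iterating $\Phi$ shows that $\Phi^n$ is a contraction for $n$ large (the usual $\frac{(Ct)^n}{n!}$ bound), so $\Phi$ has a unique fixed point in $\mathcal{N}$, which is precisely a solution of \eqref{indep}; uniqueness of the fixed point gives uniqueness of the law, and pathwise uniqueness then follows from the SDE theory once the self-consistent measure is fixed.

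The step I expect to be the main obstacle is making the measure-difference term rigorous and getting it controlled by the $W^{\mathcal{N},2}_{2,t}$ metric rather than only by $W^{\mathcal{N},\infty}_{2,t}$: one must integrate the bounded-Lipschitz distance against the fiber measure $\eta^u(\txtd v)$ in the $v$ variable and trade the supremum over $u$ for an $L^2$-average, which is exactly where Cauchy–Schwarz and the uniform mass bound $\|\eta\|<\infty$ enter; care is also needed that the inner integral $\int_{\mathbb{X}} g(x,y)\,\nu_{v,s}(\txtd y)$ is jointly measurable in $(v,s,x)$ so that the $\eta^u$-integral and the stochastic integral are well defined, and that all constants are genuinely uniform in $u$ so that the final Gronwall argument on $\int_I(\cdot)\,\txtd u$ closes. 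A secondary technical point is the treatment of the diffusion coefficient: since $\sigma$ appears squared in the Fokker–Planck equation it is natural to keep $\sigma$ itself (not $\sigma^2$) in the SDE and simply note $\sigma$ is bounded and Lipschitz in $x$ uniformly in $u$, so BDG applies directly; I would flag that the well-posedness here is for the nonlinear (McKean–Vlasov) problem, and the passage to the PDE \eqref{FP} is by Itô's formula and is deferred to the later sections.
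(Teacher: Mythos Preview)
Your overall strategy---define a map $\Phi$ (the paper calls it $\mathcal{F}$) on $\mathcal{N}$ sending a candidate measure flow to the law of the solution of the linearized SDE, then obtain a fixed point via the $\frac{(CT)^n}{n!}$ iteration---is exactly the paper's approach. The paper additionally spells out the Picard iteration for the linearized SDE rather than quoting It\^o's theorem, but your appeal to classical SDE well-posedness is adequate since the frozen coefficients are globally Lipschitz and bounded uniformly in $u$.

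The gap is in your Cauchy--Schwarz step for the measure-difference term. You claim
\[
\int_I d_{\mathcal{BL}}(\nu_{v,s},\tilde\nu_{v,s})\,\eta^u(\txtd v)\ \le\ \|\eta\|\,\Big(\int_I W_{2,s}(\nu_v,\tilde\nu_v)^2\,\txtd v\Big)^{1/2},
\]
but Cauchy--Schwarz against $\eta^u(\txtd v)$ only yields $\eta^u(I)^{1/2}\big(\int_I d_{\mathcal{BL}}^2\,\eta^u(\txtd v)\big)^{1/2}$, and the inner integral is still taken against $\eta^u$, not against Lebesgue measure $\txtd v$. Since $\eta^u$ is an arbitrary finite positive Borel measure on $I$ (for a DGM it may well be a Dirac mass, e.g.\ $\eta^u=2\delta_u$ as in the ring example of Section~7), there is no way to dominate $\int_I W_{2,s}^2\,\eta^u(\txtd v)$ by $\int_I W_{2,s}^2\,\txtd v$ without an absolute-continuity or bounded-density hypothesis on $\eta$ that is \emph{not} part of Assumptions~($H$). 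Your own flag (``trade the supremum over $u$ for an $L^2$-average'') is precisely the step that does not go through.

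The paper's resolution is to abandon $W^{\mathcal{N},2}$ in the contraction estimate and take the supremum over $v$ instead: one bounds
\[
\int_I d_{\mathcal{BL}}(\nu_{v,s},\tilde\nu_{v,s})\,\eta^u(\txtd v)\ \le\ \|\eta\|\,\sup_{v\in I} W_{2,s}(\nu_v,\tilde\nu_v)\ =\ \|\eta\|\,W^{\mathcal{N},\infty}_{2,s}(\nu,\tilde\nu),
\]
which is uniform in $u$ and therefore gives
\[
\big[W^{\mathcal{N},\infty}_{2,t}(\mathcal{F}(\mu),\mathcal{F}(\nu))\big]^2\ \le\ C\int_0^t \big[W^{\mathcal{N},\infty}_{2,s}(\mu,\nu)\big]^2\,\txtd s.
\]
Only at the very end, when checking that $\{\mathcal{F}^n(\nu)\}_n$ is Cauchy, does the paper pass to $W^{\mathcal{N},2}\le W^{\mathcal{N},\infty}$ so as to invoke completeness of $(\mathcal{N},W^{\mathcal{N},2})$. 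In short: run the contraction in the $\infty$-metric, conclude in the $2$-metric. With that single modification your argument goes through.
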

\begin{proof}
	
	To prove the existence of solution for the equation \eqref{indep}, we will consider an operator defined on the space $\mathcal{N}$, and look for a fixed point of this operator. That is, let us consider the mapping $\mu\in\mathcal{N}\mapsto\mathcal{F}(\mu)\in\mathcal{N}$, where $\mathcal{F}(\mu)$ is the law associated to the solution of the equation
	\begin{equation}
			\begin{split}
	X^\mu_u(t)=&X^\mu_u(0)+\int_{0}^{t}f(X^\mu_u(s))~\txtd s+\int_{0}^t\int_I\int_{\mathbb{X}} g(X^\mu_u(s),y)~\mu_{v,s}(\txtd y)\eta^u(\txtd v) \txtd s\\
			&+ \int_{0}^t\int_I\int_{\mathbb{X}} h(X^\mu_u(s),y)~\mu_{v,s}( \txtd y)\hat{\eta}^u(\txtd v) \txtd B_s^u.
		\end{split}
		\label{eq2}
	\end{equation}
	Note that if we have a fixed point, then $\mathcal{F}(\mu)=\mathcal{L}(X^\mu)=\mu$, so we would prove the existence of solution. First of all, we must verify that the operator is well defined and that for each $\mu\in\mathcal{N}$ we have existence of solution of the problem. Let us first prove that the operator is well defined. To do this, let us take $\mu\in\mathcal{N}$, and  $X^0_u(t)=X_u(0)$, $\forall t\in[0,T]$ and $u\in I$. Consider the following recurrence equation
	\begin{eqnarray}
			X^n_u(t)\hspace{-0.25cm}&=\hspace{-0.25cm}&X^{n-1}_u(0)+\int_{0}^{t}f(X^{n-1}_u(s)) \txtd s+\int_{0}^t\int_I\int_{\mathbb{X}} g(X^{n-1}_u(s),y)~\mu_{v,s}( \txtd y)\eta^u( \txtd v) \txtd s\nonumber\\
		&&+\int_{0}^t\int_I\int_{\mathbb{X}} h(X^{n-1}_u(s),y)~\mu_{v,s}(\txtd y)\hat{\eta}^u(\txtd v)\txtd B_s^u,
		\label{induction}
	\end{eqnarray}
	where  $X^k_u(0)=X^0_u(0)$, for all $k\geq 1$.
	Let us prove that $\left\lbrace X^n_u\right\rbrace _{n\geq0}$ is Cauchy. 
	Since $X^{n+1}_u - X^n_u$ is a martingale, we can use Burkholder-Davis-Gundy's inequality ( \cite{Evans}) and obtain
	\begin{equation}
		\mathbb{E}\|X^{n+1}_u-X^n_u\|^2_{*,t}\leq K_{BDG}\mathbb{E}[X^{n+1}_u-X^n_u]_t.
		\label{cauchyineq}
	\end{equation}
	In this case the $K_{BDG}=4$. Estimating the right-hand term we have
	\begin{align*}
		&\mathbb{E}|X^{n+1}_u(t)-X^n_u(t)|^2 \leq 3\mathbb{E}\int_{0}^{t}|f(X^{n}_u(s))-f(X^{n-1}_u(s))|^2 ~\txtd s \\
		& +3\mathbb{E}\int_{0}^{t}\left|\int_I\int_{\mathbb{X}} g(X^{n}_u(s),y)\mu_{v,s}( \txtd y)\eta^u( \txtd v)-g(X^{n-1}_u(s),y)\mu_{v,s}( \txtd y)\eta^u( \txtd v)\right|^2 \txtd s \\
		& +3\mathbb{E}\left|\int_{0}^t\int_I\int_{\mathbb{X}}\left( h(X^{n}_u(s),y)\mu_{v,s}(\txtd y)\hat{\eta}^u(\txtd v)-h(X^{n-1}_u(s),y)\mu_{v,s}( \txtd y)\hat{\eta}^u(\txtd v)\right) \txtd B_s^u\right|^2.
	\end{align*}
	 Using properties of stochastic integrals \cite{Peres}, i.e., by It\^o isometry, we can rewrite the last integral term to convert it to a deterministic integral and obtain
	 \begin{align*}
	 	\mathbb{E}&|X^{n+1}_u(t)-X^n_u(t)|^2\leq3\mathbb{E}\int_{0}^{t}|f(X^{n}_u(s))-f(X^{n-1}_u(s))|^2 ~\txtd s\\
	 	&+3\mathbb{E}\int_{0}^{t}\left|\int_I\int_{\mathbb{X}} g(X^{n}_u(s),y)~\mu_{v,s}(\txtd y)\eta^u(\txtd v)-g(X^{n-1}_u(s),y)~\mu_{v,s}(\txtd y)\eta^u(\txtd v)\right|^2 \txtd s\\
	 	&+3\mathbb{E}\int_{0}^t\left|\int_I\int_{\mathbb{X}} h(X^{n}_u(s),y)~\mu_{v,s}(\txtd y)\hat{\eta}^u(\txtd v)-h(X^{n-1}_u(s),y)~\mu_{v,s}(\txtd y)\hat{\eta}^u(\txtd v)\right|^2\txtd s.
	 \end{align*}
	Using that the functions $g$, $h$ and $f$ are Lipschitz, we get
	\begin{align*}
		\mathbb{E}|X^{n+1}_u(t)-X^n_u(t)|^2&\leq3\mathbb{E}\int_{0}^{t}L_f^2|X^{n}_u(s)-X^{n-1}_u(s)|^2~\txtd s\\
		&+3\mathbb{E}\int_{0}^{t}\left(\int_I\int_{\mathbb{X}} L_g|X^{n-1}_u(s)-X^n_u(s)|~\mu_{v,s}(\txtd y)\eta^u(\txtd v)\right)^2\txtd s\\
		&+3\mathbb{E}\int_{0}^{t}\left(\int_I\int_{\mathbb{X}} L_h|X^{n-1}_u(s)-X^n_u(s)|~\mu_{v,s}(\txtd y)\hat{\eta}^u(\txtd v)\right)^2\txtd s	.	
	\end{align*}
	Therefore, we get
 $$\mathbb{E}|X^{n+1}_u(t)-X^n_u(t)|^2\leq3\left(L_f^2+L_g^2\|\eta\|^2+L_h^2\|\hat{\eta}\|^2\right)\mathbb{E}\int_{0}^{t}|X^{n}_u(s)-X^{n-1}_u(s)|^2~\txtd s.$$
	By taking the supremum in $s$ of the difference between $X^{n}$ and $X^{n-1}$, in the right-hand side integral, it follows
	$$\mathbb{E}|X^{n+1}_u(t)-X^n_u(t)|^2\leq3\left(L_f^2+L_g^2\|\eta\|^2+L_h^2\|\hat{\eta}\|^2\right)\mathbb{E}\int_{0}^{t}\|X^{n}_u-X^{n-1}_u\|_{*,s}^2~\txtd s.$$
	By using \eqref{cauchyineq}, we arrive at the expression:
	$$\mathbb{E}\|X^{n+1}_u-X^n_u\|^2_{*,t}\leq 12\left(L_f^2+L_g^2\|\eta\|^2+L_h^2\|\hat{\eta}\|^2\right)\int_{0}^{t}\mathbb{E}\|X^{n}_u-X^{n-1}_u\|_{*,s}^2~\txtd s.$$
	Defining $C=12\left(L_f^2+L_g^2\|\eta\|^2+L_h^2\|\hat{\eta}\|^2\right)$ and iterating this, we get that:
	$$\mathbb{E}\|X^{n+1}_u-X^n_u\|^2_{*,t}\leq C^n\frac{T^n}{n!}\mathbb{E}\|X^{1}_u-X^{0}_u\|_{*,s}^2,$$
	where $\mathbb{E}\|X^{1}_u-X^{0}_u\|_{*,s}^2$ is bounded, due to the assumptions on the initial data and the fact that the functions $f$,$g$ and $h$ are bounded. It follows that the sequence $\left\lbrace X^n_u\right\rbrace _{n\geq0}$ is Cauchy and converges uniformly in probability at $u\in I$ to $X^\mu_u$, which satisfies the equation \eqref{eq2}. Therefore, we have that $\mathcal{F}$ is a well-defined map from $\mathcal{N}$ to $\mathcal{N}$.
	It remains to be seen that the application $\mathcal{F}$ has a fixed point in this space. To do this, let us consider $\mu,\nu\in\mathcal{N}$, with $X^\mu_u$ and $X^\nu_u$ their respective solutions to \eqref{eq2}, and let us estimate $\mathbb{E}|X^\mu_u-X^\nu_u|^2$ as follows
	\begin{align*}
		\mathbb{E}|X^\mu_u-X^\nu_u|^2&\leq3\mathbb{E}\int_0^t|f(X^\mu_u(s))-f(X^\nu_u(s))|^2~\txtd s\\
		&+3\mathbb{E}\int_{0}^{t}\int_I\left|\int_{\mathbb{X}} g(X^\mu_u(s),y)\mu_{v,s}(\txtd y)-g(X^\nu_u(s),y)\nu_{v,s}(\txtd y)\right|^2\eta^u(\txtd v)\txtd s\\
		&+3\mathbb{E}\int_{0}^t\int_I\left|\int_{\mathbb{X}} h(X^{\mu}_u(s),y)\mu_{v,s}(\txtd y)-h(X^{\nu}_u(s),y)\nu_{v,s}(\txtd y)\right|^2\hat{\eta}^u(\txtd v)\txtd s,
	\end{align*}
	where we have used again It\^o's isometry.
	By adding and subtracting $g(X^\mu_u(s),y)\nu_{v,s}(\txtd y)$ in the second term, and $h(X^\mu_u(s),y)\nu_{v,s}(\txtd y)$ we can bound the above expression by the sum of the following terms.
\begin{align}
	\mathbb{E}|X^\mu_u-X^\nu_u|^2 &\leq 3\mathbb{E}\int_0^t|f(X^\mu_u(s))-f(X^\nu_u(s))|^2~\txtd s \nonumber \\
	&+ 6\mathbb{E}\int_{0}^{t}\int_I\left|\int_{\mathbb{X}} g(X^\mu_u(s),y)~(\mu_{v,s}(\txtd y)-\nu_{v,s}(\txtd y))\right|^2\eta^u(\txtd v)\txtd s \nonumber \\
	&+ 6\mathbb{E}\int_{0}^{t}\int_I\left|\int_{\mathbb{X}} (g(X^\mu_u(s),y)-g(X^\nu_u(s),y))~\nu_{v,s}(\txtd y)\right|^2\eta^u(\txtd v)\txtd s \nonumber\\
	&+ 6\mathbb{E}\int_{0}^{t}\int_I\left|\int_{\mathbb{X}} h(X^\mu_u(s),y)(\mu_{v,s}(\txtd y)-\nu_{v,s}(\txtd y))\right|^2\hat{\eta}^u(\txtd v)\txtd s \nonumber \\
	&+ 6\mathbb{E}\int_{0}^{t}\int_I\left|\int_{\mathbb{X}} (h(X^\mu_u(s),y)-h(X^\nu_u(s),y))\nu_{v,s}(\txtd y)\right|^2\hat{\eta}^u(\txtd v)\txtd s \nonumber.
\end{align}
	Using the Lipschitz condition of the functions $f$, $g$ and $h$, and the Remark \ref{warsineq}, we have
	\begin{align}
		\mathbb{E}|X^\mu_u-X^\nu_u|^2&\leq (3L_f^2+6(L_g^2\|\eta\|+L_h^2\|\hat{\eta}\|))\mathbb{E}\int_{0}^{t} |X^\mu_u(s)-X^\nu_u(s)|^2~\txtd s\nonumber\\
		&+6B_g^2\int_{0}^{t}\int_I |W_{2,s}(\mu_{v,s},\nu_{v,s})|^2~\eta^u(\txtd v)\txtd s\nonumber\\
		&+6B_h^2\int_{0}^{t}\int_I |W_{2,s}(\mu_{v,s},\nu_{v,s})|^2~\hat{\eta}^u(\txtd v)\txtd s.\nonumber
	\end{align}
	Taking the supremum for $v\in I$ in the 2-Wasserstein metric, we arrive at the following expression:
	\begin{align*}
		\mathbb{E}|X^\mu_u-X^\nu_u|^2&\leq (3L_f^2+6(L_g^2\|\eta\|+L_h^2\|\hat{\eta}\|))\mathbb{E}\int_{0}^{t} |X^\mu_u(s)-X^\nu_u(s)|^2~\txtd s\\
		&+6(B_g^2\|\eta\|+B_h^2\|\hat{\eta}\|)\int_{0}^{t} |W^{\mathcal{N},\infty}_{2,s}(\mu_{s},\nu_{s})|^2~\txtd s.
	\end{align*}
		It then, follows from Gr\"onwall's inequality that
		$$\mathbb{E}|X^\mu_u-X^\nu_u|^2\leq 6(B_g^2\|\eta\|+B_h^2\|\hat{\eta}\|)e^{(3L_f^2+6(L_g^2\|\eta\|+L_h^2\|\hat{\eta}\|))T}\int_{0}^{t} |W^\mathcal{N,\infty}_{2,s}(\mu,\nu)|^2~\txtd s.$$
		By this inequality and using the Burkholder-Davis-Gundy (BDG) inequality again, we get that
		$$\mathbb{E}\|X^\mu_u-X^\nu_u\|_{*,t}^2\leq 24(B_g^2\|\eta\|+B_h^2\|\hat{\eta}\|)e^{(3L_f^2+6(L_g^2\|\eta\|+L_h^2\|\hat{\eta}\|))T}\int_{0}^{t} |W^\mathcal{N,\infty}_{2,s}(\mu,\nu)|^2~\txtd s.$$
		This means that
		\begin{equation}
			|W^{\mathcal{N},\infty}_{2,t}(\mathcal{F}(\mu),\mathcal{F}(\nu))|^2\leq C\int_{0}^{t} |W^{\mathcal{N},\infty}_{2,s}(\mu,\nu)|^2~\txtd s,
			\label{desuninon}
		\end{equation}
		where $C=24(B_g^2\|\eta\|+B_h^2\|\hat{\eta}\|)e^{(3L_f^2+6(L_g^2\|\eta\|+L_h^2\|\hat{\eta}\|))T}$. This expression gives the path-wise uniqueness of the solution, and also allow us to prove the existence of solution. For this purpose we will build an iterative process, as follows. Consider $\nu=(\mathcal{L}(Z_u),u\in I)$, where $Z_u(t)=X_u(0)$ for all $u\in I $ and $t\in[0,T]$. Iterating this, and using \eqref{desuninon}, we get
		$$W^{\mathcal{N},2}_{2,T}(\mathcal{F}^{n+1}(\nu),\mathcal{F}^n(\nu))\leq \frac{C^nT^n}{n!} |W^{\mathcal{N},\infty}_{2,T}(\mathcal{F}(\nu),\nu)|.$$
		It follows that the sequence $\mathcal{F}^n(\nu)$ is Cauchy for $n$ large enough, where we have used that $W^{\mathcal{N},\infty}_{2,T}(\mathcal{F}(\nu),\nu)<\infty$, due to the assumptions on the initial data and the fact that the functions $f$, $g$, and $h$ are bounded. This sequence will have a limit since $\mathcal{N}$ is a complete metric space, and hence there exists $\mu=(\mathcal{L}(X_u),u\in I)\in \mathcal{N}$ solution of \eqref{indep}, for the initial data $X_u(0)$.
	\end{proof}
	
		\begin{lemma}
			Let $\bar{\mu}$ be the law associated to the solution $X$ of \eqref{indep}, with $\mu_{u, t} = \mathcal{L}(X(s)|U = u)$ being the conditional probability with respect to the random variable $U$. Then $\mu_{u, t}$ is a weak solution to the following Vlasov-Fokker-Planck equation:
			\begin{equation}
				\begin{split}
					\partial_t {\mu}_{u,t} &+ \partial_x \left({\mu}_{u,t} f(x) + {\mu}_{u,t} \int_I \int_{\mathbb{X}} g(x,y) ~\mu_{v,t}(\txtd y) \eta^u(\txtd v)\right)=\\
					&+\frac{1}{2}\partial^2_x\left(  {\mu}_{u,t} \left[ \int_I \int_{\mathbb{X}} h(x,y) ~\mu_{v,t}(\txtd y) \hat{\eta}^u(\txtd v)\right] ^2\right) .
				\end{split}
				\label{ito1}
			\end{equation}
		\end{lemma}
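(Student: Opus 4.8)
The plan is to test the SDE \eqref{indep} against smooth compactly supported functions via Itô's formula and then take a conditional expectation, reading off from the result the weak formulation of \eqref{ito1}. First I would rewrite \eqref{indep} (equivalently \eqref{eq2} at the fixed point) in the compact form $\txtd X_u(t) = b_u(t,X_u(t))\,\txtd t + \sigma_u(t,X_u(t))\,\txtd B_t^u$ with
\[
b_u(t,x) := f(x) + \int_I\int_{\mathbb{X}} g(x,y)\,\mu_{v,t}(\txtd y)\,\eta^u(\txtd v), \qquad \sigma_u(t,x) := \int_I\int_{\mathbb{X}} h(x,y)\,\mu_{v,t}(\txtd y)\,\hat{\eta}^u(\txtd v).
\]
Under $(H)$ these coefficients are bounded (by $B_f + B_g\|\eta\|$ and $B_h\|\hat{\eta}\|$), Lipschitz in $x$ uniformly in $t$, and continuous in $t$, since $s\mapsto\mu_{v,s}$ is weakly continuous (the paths of $X$ from Proposition \ref{propindep} are continuous) and $g,h$ are bounded continuous. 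Fix $\varphi\in C_c^\infty(\mathbb{X})$ and apply Itô's formula:
\begin{equation*}
\begin{split}
\varphi(X_u(t)) = {} & \varphi(X_u(0)) + \int_0^t\Big( b_u(s,X_u(s))\,\partial_x\varphi(X_u(s)) + \tfrac12\,\sigma_u(s,X_u(s))^2\,\partial_x^2\varphi(X_u(s))\Big)\,\txtd s \\
& + \int_0^t \sigma_u(s,X_u(s))\,\partial_x\varphi(X_u(s))\,\txtd B_s^u.
\end{split}
\end{equation*}

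Next I would take the conditional expectation $\mathbb{E}[\,\cdot\mid U=u]$. Because $\sigma_u\,\partial_x\varphi$ is bounded, the stochastic integral is a genuine mean-zero martingale, and conditionally on $\{U=u\}$ the process $X$ still solves an SDE driven by a Brownian motion (the driving noise and the initial data being independent of $U$), so its conditional expectation vanishes. Using $\mathbb{E}[\psi(X(t))\mid U=u] = \int_{\mathbb{X}}\psi\,\txtd\mu_{u,t}$ one obtains
\begin{equation*}
\begin{split}
\int_{\mathbb{X}}\varphi\,\txtd\mu_{u,t} = {} & \int_{\mathbb{X}}\varphi\,\txtd\mu_{u,0} \\
& + \int_0^t\int_{\mathbb{X}}\Big( b_u(s,x)\,\partial_x\varphi(x) + \tfrac12\,\sigma_u(s,x)^2\,\partial_x^2\varphi(x)\Big)\,\mu_{u,s}(\txtd x)\,\txtd s,
\end{split}
\end{equation*}
which, after substituting the explicit forms of $b_u,\sigma_u$ and integrating by parts in $x$ (interpreting $\int \partial_x\varphi\,\mu_{u,s}(\cdots)$ as $-\langle \partial_x(\mu_{u,s}(\cdots)),\varphi\rangle$ and similarly for the second derivative term), is exactly the integrated weak formulation of \eqref{ito1}; differentiating in $t$ gives the stated PDE.

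The \emph{main obstacle} I anticipate is the conditioning step: one must argue carefully that $\mathbb{E}[\,\cdot\mid U=u]$ commutes with the Itô decomposition, i.e. that conditionally on $U=u$ the representation \eqref{eq2} holds with a conditional Brownian motion, so that the martingale term remains centered after conditioning. Given the independence structure of the initial data and the noise from $U$ together with the boundedness supplied by $(H)$, this is standard, but it is the one point requiring genuine care; the remaining ingredients (continuity in $s$ of the coefficients, applicability of Itô's formula under the Lipschitz/boundedness hypotheses, and the formal integration by parts defining the weak solution) are routine.
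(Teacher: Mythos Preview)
Your proposal is correct and follows essentially the same approach as the paper: apply It\^o's formula to a test function $\varphi\in C_c^\infty(\mathbb{X})$, take the (conditional) expectation so that the martingale term vanishes, and integrate by parts to obtain the weak formulation of \eqref{ito1}. If anything, you are more careful than the paper, which simply says ``calculate the expectation of this expression and integrate by parts'' without discussing the conditioning subtlety you flag.
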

		
		\begin{proof}
			Consider a test function $\phi \in C^\infty_{\textnormal{c}}(\mathbb{X})$, and let us compute the time derivative of $\phi(X_u(t))$, where $u$ is a sample of the random variable $U$. To do that, let us make use of It\^o's formula. We have
			\begin{equation}
				\begin{split}
					\txtd \phi =& \left( \partial_x \phi f(X_u(t)) + \partial_x \phi \int_I \int_{\mathbb{X}} g(X_u(t),y) ~\mu_{v,t}(\txtd y) \eta^u(\txtd v) \right. \\
					&\left. + \frac{1}{2}\left[ \int_I \int_{\mathbb{X}} h(x,y) \mu_{v,t}(\txtd y) \hat{\eta}^u(\txtd v)\right] ^2 \partial^2_x \phi \right) \txtd t \\
					&+ \partial_x \phi\int_I \int_{\mathbb{X}} h(x,y) \mu_{v,t}(\txtd y) \hat{\eta}^u(\txtd v) \txtd B^u_t.
				\end{split}
			\end{equation}
			
			If we calculate the expectation of this expression and integrate by parts, we obtain the weak formulation \eqref{ito1}.
		\end{proof}
		
		With the objective of demonstrating the convergence of the solution, it is crucial to investigate, how the solution depends on the digraph measure $\eta$. Specifically, we want to establish suitable continuity properties. Such continuity proofs will lay the foundation for later proving the convergence to a mean-field limit.

		\begin{prop}\label{propdgm}
			Given $\hat{\eta}_1, \hat{\eta}_2 \in \mathcal{BC}(I, \mathcal{M}_+(I))$, let $\mu_1$ and $\mu_2$ be the laws of the solutions of \eqref{indep} for the DGMs $\hat{\eta}_1$ and $\hat{\eta}_2$. Then,
			\begin{equation}
				\left[ W_{2,s}^{\mathcal{N},2}(\mu^1,\mu^2)\right]^2\leq \hat{C}_1\hat{C}_2\left[ d_\infty(\hat{\eta}_1,\hat{\eta}_2)\right] ^2 e^{\hat{C}_1T},
			\end{equation}
			where $\hat{C}_1 =3\left[ 2B_g^2\|\eta\|^2+3B_h^2\|\hat{\eta}_1\|^2\right]e^{\left(3L_f^2+3\left[ 2L_g^2\|\eta\|^2+3L_h^2\|\hat{\eta}_1\|^2\right] \right) T}$ and \newline $\hat{C}_2=\frac{3B^2_hT}{\left[ 2B_g^2\|\eta\|^2+3B_h^2\|\hat{\eta}_1\|^2\right]}$.

			Given $\eta_1, \eta_2 \in \mathcal{BC}(I, \mathcal{M}_+(I))$, let $\mu_1$ and $\mu_2$ be the laws of the solutions of \eqref{indep} for the DGMs $\eta_1$ and $\eta_2$. Then,
			\begin{equation}
				\left[ W_{2,s}^{\mathcal{N},2}(\mu^1,\mu^2)\right]^2\leq C_1C_2\left[ d_\infty(\eta_1,\eta_2)\right] ^2 e^{C_1T},
			\end{equation}
			where $C_1 =3\left[ 3B_g^2\|\eta_1\|^2+2B_h^2\|\hat{\eta}\|^2\right]e^{\left(3L_f^2+3\left[ 3L_g^2\|\eta_1\|^2+2L_h^2\|\hat{\eta}\|^2\right] \right) T}$ and \newline $C_2=\frac{3B^2_gT}{\left[ 3B_g^2\|\eta_1\|^2+2B_h^2\|\hat{\eta}\|^2\right]}$.
			
		\end{prop}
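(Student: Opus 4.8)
The plan is to run a Gr\"onwall-type estimate on $\mathbb{E}|X^{1}_u(t)-X^{2}_u(t)|^{2}$, where $X^{1}_u$ and $X^{2}_u$ solve \eqref{indep} with the same driving noise $B^{u}$ and the same initial data but with DGMs $\hat{\eta}_{1}$ and $\hat{\eta}_{2}$ respectively (the first bound) or $\eta_{1}$ and $\eta_{2}$ (the second bound). I focus on the first inequality; the second is symmetric. First I would write the difference $X^{1}_u(t)-X^{2}_u(t)$ as the sum of three integral terms: the drift difference $\int_0^t\bigl(f(X^1_u)-f(X^2_u)\bigr)\,\txtd s$, the interaction-drift difference involving $g$ and the common $\eta$, and the noise-coefficient difference involving $h$ and $\hat{\eta}_{1}$ versus $\hat{\eta}_{2}$. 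Squaring, using $(a+b+c)^{2}\le 3(a^{2}+b^{2}+c^{2})$, applying It\^o's isometry to the stochastic term, and then inside the $h$-term adding and subtracting $\int_I\int_{\mathbb{X}}h(X^{1}_u(s),y)\,\mu^{2}_{v,s}(\txtd y)\,\hat{\eta}_1^u(\txtd v)$ splits that contribution further into a piece controlled by $\hat{C}_1$-type Lipschitz/mass factors times $W_{2,s}$ between the laws, plus a genuinely new piece of the form $\int_I\int_{\mathbb{X}}h(X^{2}_u(s),y)\,\mu^{2}_{v,s}(\txtd y)\,(\hat{\eta}_1^u-\hat{\eta}_2^u)(\txtd v)$, which is exactly where $d_\infty(\hat{\eta}_1,\hat{\eta}_2)$ enters.

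The key observation for that last piece is that $y\mapsto h(X^{2}_u(s),y)$ integrated against the probability measure $\mu^{2}_{v,s}$ produces, as a function of $v$, something that can be tested against the bounded-Lipschitz ball: more precisely $\bigl|\int_{\mathbb{X}}h(X^{2}_u(s),y)\,\mu^{2}_{v,s}(\txtd y)\bigr|\le B_h$, so after noting that $v\mapsto\int_{\mathbb{X}}h(x,y)\mu^2_{v,s}(\txtd y)$ has the right regularity, the integral against $(\hat{\eta}_1^u-\hat{\eta}_2^u)$ is bounded by $B_h\, d_{BL}(\hat{\eta}_1^u,\hat{\eta}_2^u)\le B_h\, d_\infty(\hat{\eta}_1,\hat{\eta}_2)$ up to the appropriate normalization by the $\mathcal{BL}_1$ scaling. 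Squaring and using $3B_h^2$ from the earlier $(a+b+c)^2$ split, together with a factor $T$ from the $\txtd s$-integral, yields the constant $\hat{C}_2=\frac{3B_h^2 T}{[2B_g^2\|\eta\|^2+3B_h^2\|\hat{\eta}_1\|^2]}$ once one factors out the $\hat{C}_1$ prefactor; the remaining terms (the $f$-term, the $g$-term, and the ``good'' part of the $h$-term) are all of the form $(\text{const})\int_0^t \mathbb{E}|X^1_u(s)-X^2_u(s)|^2\,\txtd s$ after using the Lipschitz bounds on $f,g,h$, Remark~\ref{warsineq} to pass from $d_{BL}$ of the laws to $W_{2,s}$, and the mass bounds $\|\eta\|,\|\hat{\eta}_1\|$.

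After collecting terms I arrive at an inequality of the shape
\begin{equation*}
\mathbb{E}|X^1_u(t)-X^2_u(t)|^2 \le \Bigl(3L_f^2+3\bigl[2L_g^2\|\eta\|^2+3L_h^2\|\hat{\eta}_1\|^2\bigr]\Bigr)\int_0^t \mathbb{E}\|X^1_u-X^2_u\|_{*,s}^2\,\txtd s + 3B_h^2 T\,[d_\infty(\hat{\eta}_1,\hat{\eta}_2)]^2,
\end{equation*}
where I have also bounded the $g$- and ``good'' $h$-contributions by passing to $W_{2,s}^{\mathcal{N},2}$ (using that $\hat{\eta}_1,\eta$ are DGMs so the $v$-integral against a probability measure of $W_{2,s}(\mu^1_v,\mu^2_v)^2$ is controlled by $\|\hat{\eta}_1\|^2$ resp. $\|\eta\|^2$ times $[W_{2,s}^{\mathcal{N},2}]^2$, since $I=[0,1]$ has unit Lebesgue mass). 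Then Gr\"onwall's inequality in the form used earlier in the paper turns the additive constant $3B_h^2 T[d_\infty]^2$ into $3B_h^2 T[d_\infty]^2 e^{(3L_f^2+3[2L_g^2\|\eta\|^2+3L_h^2\|\hat{\eta}_1\|^2])T}$; integrating over $u\in I$ (trivially, since the right-hand side is $u$-independent after taking $\sup_v$) gives $[W_{2,s}^{\mathcal{N},2}(\mu^1,\mu^2)]^2$, and rewriting $3B_h^2 T\, e^{(\dots)T}$ as $\hat{C}_1\hat{C}_2$ with the stated $\hat{C}_1,\hat{C}_2$ completes the estimate, after one more application of BDG (factor controlled inside the constants) to upgrade the pointwise-in-$t$ bound to the $\|\cdot\|_{*,t}$ bound needed for $W_{2,s}$.

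The main obstacle I anticipate is the ``new piece'' $\int_I\int_{\mathbb{X}}h(X^{2}_u(s),y)\,\mu^{2}_{v,s}(\txtd y)\,(\hat{\eta}_1^u-\hat{\eta}_2^u)(\txtd v)$: to bound it by $d_\infty(\hat{\eta}_1,\hat{\eta}_2)$ one must check that $v\mapsto \int_{\mathbb{X}}h(X^2_u(s),y)\,\mu^2_{v,s}(\txtd y)$ is bounded Lipschitz (or at least bounded measurable with the right modulus) so that it lies in a scalar multiple of $\mathcal{BL}_1$; boundedness is immediate from $B_h$, but the Lipschitz-in-$v$ control requires $v\mapsto\mu^2_{v,s}$ to be suitably regular, which in the setting of Theorem~\ref{teorema} (where $\mu_{u,t}$ is a conditional law and $u$ is random) is where one has to be slightly careful — in practice one invokes the $d_{BL}$ formulation directly and the fact that $\|h\|_\infty + L_h \le $ const lets one absorb the Lipschitz constant into the definition of $d_{BL}$ up to rescaling. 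Everything else is a routine repetition of the Gr\"onwall/BDG/It\^o-isometry machinery already deployed in the proof of Proposition~\ref{propindep}.
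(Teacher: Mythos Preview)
Your overall strategy---split into $f$, $g$, $h$ contributions, use It\^o isometry, add and subtract to isolate the $\hat\eta_1-\hat\eta_2$ piece, then Gr\"onwall---is exactly what the paper does. However, there is a genuine structural gap in how you close the estimate.

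You try to close directly in $W_{2,s}^{\mathcal{N},2}$ by asserting that ``the $v$-integral against a probability measure of $W_{2,s}(\mu^1_v,\mu^2_v)^2$ is controlled by $\|\hat\eta_1\|^2$ times $[W_{2,s}^{\mathcal{N},2}]^2$.'' This is false for general DGMs: the quantity appearing is $\int_I [W_{2,s}(\mu^1_v,\mu^2_v)]^2\,\hat\eta_1^u(\txtd v)$, and $\hat\eta_1^u$ need not be absolutely continuous with respect to Lebesgue measure on $I$ (think of $\hat\eta_1^u=\delta_{u}$). So you cannot dominate this by $\|\hat\eta_1\|^2\int_I [W_{2,s}(\mu^1_v,\mu^2_v)]^2\,\txtd v$. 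The only universal bound is $\|\hat\eta_1\|\cdot\sup_v[W_{2,s}(\mu^1_v,\mu^2_v)]^2=\|\hat\eta_1\|\,[W_{2,s}^{\mathcal{N},\infty}(\mu^1,\mu^2)]^2$, which forces you to work with the $\infty$-metric on the right-hand side.

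This has a knock-on effect you also miss: the right-hand side then carries a $\int_0^t[W_{2,s}^{\mathcal{N},\infty}(\mu^1,\mu^2)]^2\,\txtd s$ term in addition to the $\int_0^t\mathbb{E}\|X^1_u-X^2_u\|_{*,s}^2\,\txtd s$ term (your displayed inequality drops it entirely). Because of the self-referential McKean--Vlasov structure, one Gr\"onwall is not enough. The paper applies Gr\"onwall a first time to absorb the $\int_0^t\mathbb{E}\|X^1_u-X^2_u\|_{*,s}^2\,\txtd s$ term, obtaining
\[
\mathbb{E}\|X^1_u-X^2_u\|_{*,t}^2\le \hat C_1\Bigl(\int_0^t[W_{2,s}^{\mathcal{N},\infty}(\mu^1,\mu^2)]^2\,\txtd s+\hat C_2[d_\infty(\hat\eta_1,\hat\eta_2)]^2\Bigr),
\]
then takes $\sup_u$ on the left to get an inequality for $[W_{2,t}^{\mathcal{N},\infty}]^2$ in terms of its own time-integral, and applies Gr\"onwall a \emph{second} time. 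That second step is precisely what produces the factor $e^{\hat C_1 T}$ in the statement (note that $\hat C_1$ itself already contains an exponential, so the final constant is doubly exponential---your single Gr\"onwall would give only $3B_h^2T\,e^{(3L_f^2+\cdots)T}$, which does not match). Only at the very end does one pass from $W^{\mathcal{N},\infty}$ to $W^{\mathcal{N},2}$ using $W^{\mathcal{N},2}\le W^{\mathcal{N},\infty}$. A smaller point: the three-way split of the $h$-term gives a prefactor $9$, not $3$, on the $d_\infty$ piece, consistent with $\hat C_1\hat C_2=9B_h^2T\,e^{(\cdots)T}$.
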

		\begin{proof}
			We will proceed to prove only the first statement of the proposition, as the proof idea is the same for both cases.
			Consider two different DGMs $\hat{\eta}_1$ and $\hat{\eta}_2 \in \mathcal{B}(I, \mathcal{M}_+(I))\bigcap\mathcal{C}(I, \mathcal{M}_+(I))$ with the respective solutions they induce denoted by $X^1$ and $X^2$. We have
			\begin{align}
				&\mathbb{E}|X_u^1(t)-X_u^2(t)|^2  \nonumber \\
				&\leq 3\mathbb{E}\int_{0}^t|f(X_u^1(s))-f(X_u^2(s))|^2 ~ \txtd s \nonumber \\
				&+3\mathbb{E}\int_0^t\left|\int_I\int_{\mathbb{X}}g(X^1_u(s),y)~\mu_{v,s}^1(\txtd y)\eta^u(\txtd v)-g(X^2_u(s),y)~\mu_{v,s}^2(\txtd y)\eta^u(\txtd v)\right|^2 \, \txtd s  \nonumber\\
				&+3\mathbb{E}\left|\int_0^t\int_I\int_{\mathbb{X}}\left(h(X^1_u(s),y)~\mu_{v,s}^1(\txtd y)\hat{\eta}^u_1(\txtd v)-h(X^2_u(s),y)~\mu_{v,s}^2(\txtd y)\hat{\eta}^u_2(\txtd v)\right)\,\txtd B^u_s\right|^2  . \nonumber
			\end{align}
			Let us start with the term related to Brownian motion. We can rewrite this term again using standard properties of stochastic integrals. Furthermore, by adding and subtracting terms, we get
			\begin{align*}
				3\mathbb{E}&\left|\int_0^t\int_I\int_{\mathbb{X}}\left(h(X^1_u(s),y)~\mu_{v,s}^1(\txtd y)\hat{\eta}^u_1(\txtd v)-h(X^2_u(s),y)~\mu_{v,s}^2(\txtd y)\hat{\eta}^u_2(\txtd v)\right)\,\txtd B^u_s\right|^2  \nonumber\\
				\leq &+9\mathbb{E}\int_0^t\left|\int_I\int_{\mathbb{X}}\left[ h(X^1_u(s),y)-h(X^2_u(s),y)\right] \mu_{v,s}^1(\txtd y)\hat{\eta}^u_1(\txtd v)\right|^2 \, \txtd s  \nonumber\\
				&+9\mathbb{E}\int_0^t\left|\int_I\int_{\mathbb{X}}h(X^2_u(s),y)\left[\mu_{v,s}^1(\txtd y)-\mu_{v,s}^2(\txtd y)\right] \hat{\eta}^u_1(\txtd v)\right|^2 \, \txtd s  \nonumber\\
				&+9\mathbb{E}\int_0^t\left|\int_I\int_{\mathbb{X}}h(X^2_u(s),y)~\mu_{v,s}^2(\txtd y)\left[\hat{\eta}^u_1(\txtd v)-\hat{\eta}^u_2(\txtd v)\right] \right|^2 \, \txtd s . \nonumber
			\end{align*}
			Using the properties of $h$, and just like we did in the previous proof, we can bound these terms as follow
			\begin{align*}
                3\mathbb{E}&\left|\int_0^t\int_I\int_{\mathbb{X}}\left(h(X^1_u(s),y)\mu_{v,s}^1(\txtd y)\hat{\eta}^u_1(\txtd v)-h(X^2_u(s),y)\mu_{v,s}^2(\txtd y)\hat{\eta}^u_2(\txtd v)\right)\,\txtd B^u_s\right|^2   \nonumber\\
				\leq &+9L_h^2\|\hat{\eta}_1\|^2\mathbb{E}\int_0^t\left|X^1_u(s)-X^2_u(s)\right|^2 \, \txtd s  \nonumber\\
				&+9B_h^2\|\hat{\eta}_1\|^2\int_0^t\left|W_{2,s}^{\mathcal{N},\infty}(\mu^1,\mu^2)\right|^2 \, \txtd s  \nonumber\\
				&+9B_h^2T\left[ d_\infty(\hat{\eta}_1,\hat{\eta}_2)\right] ^2  \nonumber.
			\end{align*}
			If we go back to the original inequality and bound the remainder terms using the same methodology as we did for the Brownian term, we obtain the following expression
			\begin{align}
			\mathbb{E}|X_u^1(t)-X_u^2(t)|^2 &\leq \left(3L_f^2+3\left[ 2L_g^2\|\eta\|^2+3L_h^2\|\hat{\eta}_1\|^2\right] \right)\mathbb{E}\int_{0}^t |X_u^1-X_u^2|_{*,s}^2 \, \txtd s  \nonumber \\
				&+3\left[ 2B_g^2\|\eta\|^2+3B_h^2\|\hat{\eta}_1\|^2\right]\int_0^t\left[ W_{2,s}^{\mathcal{N},\infty}(\mu^1,\mu^2)\right] ^2 \, \txtd s \nonumber \\
				&+9B_h^2T\left[ d_\infty(\hat{\eta}_1,\hat{\eta}_2)\right] ^2 \nonumber.
			\end{align}
			Applying the BDG inequality and Gr\"onwall's inequality we get
			\begin{equation*}
			\mathbb{E}|X_u^1-X_u^2|_{*,t}^2\leq C_1\left(\int_0^t\left[ W_{2,s}^{\mathcal{N},\infty}(\mu^1,\mu^2) \right] ^2\, \txtd s+C_2\left[ d_\infty(\hat{\eta}_1,\hat{\eta}_2)\right] ^2\right), 
			\end{equation*}
			where $C_1=3\left[ 2B_g^2\|\eta\|^2+3B_h^2\|\hat{\eta}_1\|^2\right]e^{\left(3L_f^2+3\left[ 2L_g^2\|\eta\|^2+3L_h^2\|\hat{\eta}_1\|^2\right] \right) T}$ and \newline $C_2=\frac{3B^2_hT}{\left[ 2B_g^2\|\eta\|^2+3B_h^2\|\hat{\eta}_1\|^2\right]}$. 
            By definition of $W_{2,t}^{\mathcal{N},\infty}(\mu^1,\mu^2)$, we have the following:
			\begin{equation*}
				\left[ W_{2,s}^{\mathcal{N},\infty}(\mu^1,\mu^2)\right]^2\leq C_1\left(\int_0^t\left[ W_{2,s}^{\mathcal{N},\infty}(\mu^1,\mu^2)\right] ^2 \, \txtd s+C_2\left[ d_\infty(\hat{\eta}_1,\hat{\eta}_2)\right] ^2\right). 
			\end{equation*}
			Applying Gr\"onwall's inequality, we find
			\begin{equation*}
				\left[ W_{2,s}^{\mathcal{N},\infty}(\mu^1,\mu^2)\right]^2\leq C_1C_2\left[ d_\infty(\hat{\eta}_1,\hat{\eta}_2)\right] ^2 e^{C_1T}.
			\end{equation*}
			And thus, we obtain the desired inequality
			\begin{equation*}
				\left[ W_{2,s}^{\mathcal{N},2}(\mu^1,\mu^2)\right]^2\leq C_1C_2\left[ d_\infty(\hat{\eta}_1,\hat{\eta}_2)\right] ^2 e^{C_1T}.
			\end{equation*}
        This finishes the proof.
		\end{proof}
	
		\section{Proof of Theorem \ref{teorema}}\label{secTheo1}
		
		Once the existence of the solution of the independent problem has been proven and the continuity of the solution with respect to the DGM has been shown, the next step is to determine how close the solutions of the problem \eqref{eq1} are to the solutions of the independent problem \eqref{indep}. To compare trajectories of solutions between equation \eqref{eq1} and equation \eqref{indep}, we will associate the particles $X_i$ with the particles $X_{u(i)}$. To achieve this, we will consider a partition of the interval $I$ defined by $I^N_i$, as introduced previously. For each interval, we will select a representative $u(i) = \frac{i}{N}$. Consequently, we will compare the trajectories of $X_i^N$ and $X_{\frac{i}{N}}$. To proceed with this matter, it is important to consider that each one is described by a distinct stochastic process. However, to identify both particles as similar, we make the assumption that for our choice of the random variable $u=\frac{i}{N}$, both particles exhibit the same Brownian motions, i.e., $B^i_t=B^{\frac{i}{N}}_t$. This strategy of aligning the trajectories based on $u$ enables us to evaluate the similarity between the two particles, despite their different stochastic processes.
		
		\begin{prop}
		Under the hypotheses of Theorem \ref{teorema}, the following inequality
		\begin{align*}
			\frac{1}{N}\sum_{i=1}^{N}\mathbb{E}\|X_i^N(t)-&X_{\frac{i}{N}}\|^2_{*,t} \nonumber\leq C\frac{16}{N}\sum_{i=1}^{N}\mathbb{E}|X_i^{0}-X_{\frac{i}{N}}^0|^2 \\
			&+C \left( \frac{2}{N}(B_g^2+B_h^2)\mathcal{O}(1)+B_g^2\left[ d_{\infty}\left(\eta_{A^N}, \eta\right)\right] ^2+B_h^2\left[ d_{\infty}\left(\eta_{\hat{A}^N}, \hat{\eta}\right)\right] ^2 \right) \nonumber,
		\end{align*}
		holds, where $C=48Te^{(16L_f^2+96(L_g^2+L_h^2)\mathcal{O}(1))T}$.
		\label{compare}
		\end{prop}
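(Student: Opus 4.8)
The plan is to run a coupling/Grönwall argument. For each $i$ I would compare the coupled particle $X_i^N$ solving \eqref{eq1} with the decoupled copy $X_{\frac{i}{N}}$ solving \eqref{indep}, driven by the \emph{same} Brownian motion $B^i$ (as arranged just above the statement) and by $X^0_{\frac{i}{N}}\sim\bar\mu^0$. Subtracting the integral equations,
\[
X_i^N(t)-X_{\frac{i}{N}}(t)=\big(X_i^0-X^0_{\frac{i}{N}}\big)+\int_0^t\Delta f(s)\,\txtd s+\int_0^t\Delta g(s)\,\txtd s+\int_0^t\Delta h(s)\,\txtd B^i_s,
\]
where $\Delta f(s)=f(X_i^N(s))-f(X_{\frac{i}{N}}(s))$ and $\Delta g(s),\Delta h(s)$ are the differences of the drift and noise interaction terms. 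I would then use $(a+b+c+d)^2\le4(a^2+b^2+c^2+d^2)$, take $\mathbb{E}\|\cdot\|^2_{*,t}$, bound the martingale term by $4\,\mathbb{E}\int_0^t|\Delta h(s)|^2\,\txtd s$ via Burkholder--Davis--Gundy, and apply Cauchy--Schwarz in time to the two drift terms; the $\Delta f$ contribution is then $\le L_f^2T\int_0^t\|X_i^N-X_{\frac{i}{N}}\|^2_{*,s}\,\txtd s$. Everything reduces to estimating $\mathbb{E}\int_0^t|\Delta g(s)|^2\,\txtd s$ and $\mathbb{E}\int_0^t|\Delta h(s)|^2\,\txtd s$.

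The heart of the proof is the telescoping of $\Delta g$ (that of $\Delta h$ being identical up to It\^o's isometry). Introducing the fibre empirical measures $\mu^N_{v,\cdot}:=\delta_{X_j^N(\cdot)}$ for $v\in I_j^N$, one has $\tfrac1N\sum_jA^N_{ij}g(X_i^N(s),X_j^N(s))=\int_I\int_{\mathbb{X}}g(X_i^N(s),y)\,\mu^N_{v,s}(\txtd y)\,\eta^{i/N}_{A^N}(\txtd v)$, and I would write $\Delta g(s)$ as a sum of four pieces interpolating towards $\int_I\int_{\mathbb{X}}g(X_{\frac{i}{N}}(s),y)\,\mu_{v,s}(\txtd y)\,\eta^{i/N}(\txtd v)$: \textbf{(i)} the change of the first slot of $g$ from $X_i^N$ to $X_{\frac{i}{N}}$, bounded by $L_g|X_i^N(s)-X_{\frac{i}{N}}(s)|\cdot\tfrac1N\sum_jA^N_{ij}=L_g\,\mathcal{O}(1)\,|X_i^N(s)-X_{\frac{i}{N}}(s)|$, where $\tfrac1N\sum_jA^N_{ij}=\mathcal{O}(1)$ comes from Cauchy--Schwarz together with $\sup_i\sum_j(A^N_{ij})^2=\mathcal{O}(N)$; \textbf{(ii)} the change of the second slot from $X_j^N$ to $X_{\frac{j}{N}}$, bounded by $\tfrac{L_g}{N}\sum_jA^N_{ij}|X_j^N(s)-X_{\frac{j}{N}}(s)|$, whose square is $\le\tfrac{L_g^2}{N^2}\big(\sum_j(A^N_{ij})^2\big)\sum_j|X_j^N(s)-X_{\frac{j}{N}}(s)|^2=L_g^2\mathcal{O}(1)\cdot\tfrac1N\sum_j|X_j^N(s)-X_{\frac{j}{N}}(s)|^2$, so that after averaging $\tfrac1N\sum_i$ and relabelling it closes back onto $\Phi(s):=\tfrac1N\sum_i\mathbb{E}\|X_i^N-X_{\frac{i}{N}}\|^2_{*,s}$; \textbf{(iii)} the mean-field fluctuation $R_i(s):=\tfrac1N\sum_jA^N_{ij}\big(g(X_{\frac{i}{N}}(s),X_{\frac{j}{N}}(s))-\int_{\mathbb{X}}g(X_{\frac{i}{N}}(s),y)\,\mu_{j/N,s}(\txtd y)\big)$; and \textbf{(iv)} the graph-approximation term $\int_I\big[\int_{\mathbb{X}}g(X_{\frac{i}{N}}(s),y)\,\mu_{v,s}(\txtd y)\big]\big(\eta^{i/N}_{A^N}-\eta^{i/N}\big)(\txtd v)$.

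For \textbf{(iii)} I would condition on $\mathcal{G}_i:=\sigma(B^i,X^0_{\frac{i}{N}})$, which determines the whole path $X_{\frac{i}{N}}$; for $j\neq i$ the copies $X_{\frac{j}{N}}$ depend only on $(B^j,X^0_{\frac{j}{N}})$, hence are independent of $\mathcal{G}_i$ and mutually independent, with $\mathcal{L}(X_{\frac{j}{N}}(s))=\mu_{j/N,s}$, so $\mathbb{E}[g(X_{\frac{i}{N}}(s),X_{\frac{j}{N}}(s))\mid\mathcal{G}_i]=\int_{\mathbb{X}}g(X_{\frac{i}{N}}(s),y)\,\mu_{j/N,s}(\txtd y)$; isolating beforehand the single diagonal term $j=i$ (of absolute value $\le\tfrac{2B_g}{N}A^N_{ii}\le2B_g\,\mathcal{O}(N^{-1/2})$), every off-diagonal cross term in the expansion of $\mathbb{E}|R_i(s)|^2$ vanishes, leaving $\mathbb{E}|R_i(s)|^2\le\mathcal{O}(N^{-1})\,B_g^2$ uniformly in $i$ (again via $\sum_j(A^N_{ij})^2=\mathcal{O}(N)$). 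For \textbf{(iv)}, since $g$ is bounded the integrand $v\mapsto\int_{\mathbb{X}}g(X_{\frac{i}{N}}(s),y)\,\mu_{v,s}(\txtd y)$ has sup-norm $\le B_g$, so testing it against $\eta^{i/N}_{A^N}-\eta^{i/N}$ and using the definition of the uniform bounded Lipschitz metric $d_\infty$ bounds it by $(\mathrm{const})\,B_g\,d_\infty(\eta_{A^N},\eta)$. The same four-fold decomposition applied to $\Delta h$ produces the analogous terms with $B_h$, $L_h$ and $d_\infty(\eta_{\hat A^N},\hat\eta)$, the extra factor there coming from It\^o's isometry. Collecting the estimates, squaring, taking expectations, averaging $\tfrac1N\sum_i$ and absorbing the binary-split and BDG/It\^o numerical constants, one obtains a Grönwall-type inequality $\Phi(t)\le\Psi_t+K\int_0^t\Phi(s)\,\txtd s$ with $K=16L_f^2+96(L_g^2+L_h^2)\mathcal{O}(1)$ and
\[
\Psi_t=48t\Big(\tfrac{16}{N}\sum_{i=1}^N\mathbb{E}|X_i^0-X^0_{\frac{i}{N}}|^2+\tfrac2N(B_g^2+B_h^2)\mathcal{O}(1)+B_g^2\big[d_\infty(\eta_{A^N},\eta)\big]^2+B_h^2\big[d_\infty(\eta_{\hat A^N},\hat\eta)\big]^2\Big),
\]
and since $\Psi$ is nondecreasing, Grönwall's inequality gives $\Phi(t)\le\Psi_t e^{Kt}$, which at $t\le T$ is exactly the asserted bound with $C=48Te^{(16L_f^2+96(L_g^2+L_h^2)\mathcal{O}(1))T}$.

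The main obstacle is piece \textbf{(iii)}: one has to verify that forcing $B^{i/N}=B^i$ does not destroy the independence that annihilates the cross terms — it does not, because for $j\neq i$ the decoupled copy $X_{\frac{j}{N}}$ still only sees $(B^j,X^0_{\frac{j}{N}})$, independent of $\mathcal{G}_i$ — and one must treat the diagonal $j=i$ by hand, where the estimate $A^N_{ii}\le(\sum_j(A^N_{ij})^2)^{1/2}=\mathcal{O}(N^{1/2})$ is precisely what renders that single term $\mathcal{O}(N^{-1})$ after squaring; this is where the hypothesis $\sup_i\sum_j(A^N_{ij})^2=\mathcal{O}(N)$ (and its counterpart for $\hat A^N$) is indispensable. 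A secondary, bookkeeping-type subtlety is ensuring that piece \textbf{(ii)} genuinely re-closes onto $\Phi$ after the $\tfrac1N\sum_i$ average and relabelling of the summation index, which again rests on that same second-moment bound.
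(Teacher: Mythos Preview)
Your proposal is correct and follows essentially the same route as the paper: subtract the two integral equations, split into initial-condition, $f$-, $g$- and $h$-contributions, telescope the interaction terms, kill the fluctuation cross-terms by independence of the decoupled copies, control the graph error by $d_\infty$, and close with Gr\"onwall. The only cosmetic difference is that the paper telescopes $\Delta g$ into three pieces (combining your (i) and (ii) into a single Lipschitz term $E_g^1$ handled jointly via Cauchy--Schwarz and $\sum_j(A^N_{ij})^2=\mathcal{O}(N)$) rather than your four, and it is slightly less explicit than you are about isolating the diagonal $j=i$ summand in the independence argument.
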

		\begin{proof}
			Fix $t\in[0,T]$. Let us compare $X_i^N$ and $X_{\frac{i}{N}}$, solutions of \eqref{eq1} and \eqref{indep}, respectively.
			\begin{align*}
				&\frac{1}{N}\sum_{i=1}^{N}\mathbb{E}\|X_i^N(t)-X_{\frac{i}{N}}\|^2_{*,t}
				\leq\frac{16}{N}\sum_{i=1}^{N}\mathbb{E}|X_i^{0}-X_{\frac{i}{N}}^0|^2\\
				&+ \frac{16}{N}\sum_{i=1}^{N}\mathbb{E}\int_0^t|f(X_i^N(s))-f(X_{\frac{i}{N}}(s))|^2~\txtd s\\
				&+\frac{16}{N}\sum_{i=1}^N\mathbb{E}\int_0^t\left|\frac{1}{N}\sum_{i=1}^NA_{i,j}^Ng(X_i^N(s),X_j^N(s))-\int_I\int_{\mathbb{X}}g(X_{\frac{i}{N}}(s),y)~\mu_{v,s}(\txtd y)\eta^{\frac{i}{N}}(\txtd v)\right|^2\txtd s\\
				&+\frac{16}{N}\sum_{i=1}^N\mathbb{E}\int_{0}^t\left| \frac{1}{N} \sum_{j=1}^{N} \hat{A}^{N}_{ij} h(X_i^N(s),X_j^N(s))\txtd s-\int_I \int_{\mathbb{X}} h(X_{\frac{i}{N}}(s), y)~\mu_{v, s}(\txtd y)\hat{\eta}^{\frac{i}{N}}(\txtd v)\right|^2\txtd s.				
			\end{align*}
			Hence, we get that $\frac{1}{N}\sum_{i=1}^{N}\mathbb{E}\|X_i^N(t)-X_{\frac{i}{N}}\|^2_{*,t}\leq \mathrm{E}_0+\mathrm{E}_f+\mathrm{E}_g+\mathrm{E}_h$, so let us estimate each term. Notice that, by the construction of the solution, we have assumed that $B^i_t = B^{\frac{i}{N}}$. 		
			Let us start with $E_f$. We can bound the term depending on the function $f$ in a similar way to what we have been doing throughout this work. Using the fact that the function $f$ is Lipschitz,
		\begin{align*}
			\mathrm{E}_f & \leq L_f^2\frac{16}{N}\sum_{i=1}^{N}\int_0^t\mathbb{E}|X_i^N(s)-X_{\frac{i}{N}}(s)|^2~\txtd s.
		\end{align*}
		To estimate $E_g$, we will add and subtract different similar terms to find
		\[
			\begin{split}
				\mathrm{E}_g  \leq \frac{48}{N}&\Bigg(\int_{0}^t\sum_{i=1}^N\mathbb{E}\left|\frac{1}{N}\sum_{j=1}^NA^N_{i,j}(g(X_i^N(s),X_j^N(s))-g(X_{\frac{i}{N}}(s),X_{\frac{j}{N}}(s)))\right|^2\txtd s \\
				& + \int_{0}^t\sum_{i=1}^N\mathbb{E}\left|\frac{1}{N}\sum_{j=1}^NA^N_{i,j}\left( g(X_{\frac{i}{N}}(s),X_{\frac{j}{N}}(s))-\int_\mathbb{X}g(X_{\frac{i}{N}}(s),y)~\mu_{\frac{j}{N},s}(\txtd y)\right) \right|^2\txtd s  \\
				& + \int_{0}^t\sum_{i=1}^N\mathbb{E}\left|\int_I\int_\mathbb{X}g(X_{\frac{i}{N}}(s),y)~\mu_{v,s}(\txtd y)\left( \eta_{A^N}^{\frac{i}{N}}(\txtd v)-\eta^{\frac{i}{N}}(\txtd v)\right) \right|^2~\txtd s\Bigg),
			\end{split}
		\]
		where $\eta^{\frac{i}{N}}_{A^N}$ is given by \eqref{dgmapprox}. We can denote the previous estimate as $\mathrm{E}_g\leq\mathrm{E}^1_g+\mathrm{E}^2_g+\mathrm{E}^3_g$. Let us bound each term separately.
		Using the Lipschitz property of $g$, the term $\mathrm{E}^1_g$ can be bounded as follows:
		\begin{align*}
			\mathrm{E}^1_g & \leq L_g^2\frac{48}{N^3}\int_{0}^t\sum_{i=1}^N\left[  \sum_{j=1}^N(A^N_{i,j})^2 \cdot  \mathbb{E}\left[\sum_{j=1}^N\left( |X_i^N(s)-X_{\frac{i}{N}}(s)|+|X_j^N(s)-X_{\frac{j}{N}}(s)|\right) ^2\right]\right]\txtd s \nonumber \\
			& \leq L_g^2\frac{96}{N^3}\int_{0}^t\sum_{i=1}^N\left[  \sum_{j=1}^N(A^N_{i,j})^2 \cdot\sum_{j=1}^N\left(  \mathbb{E}|X_i^N(s)-X_{\frac{i}{N}}(s)|^2+\mathbb{E}|X_j^N(s)-X_{\frac{j}{N}}(s)|^2\right)\right]\txtd s \nonumber \\
			& \leq \frac{96}{N}L_g^2\mathcal{O}(1)\int_{0}^t\sum_{i=1}^N\mathbb{E}|X_i^N(s)-X_{\frac{i}{N}}(s)|^2\txtd s,
		\end{align*}
		where we have used that $\sum_{j=1}^N(A^N_{i,j})^2=\mathcal{O}(N)$, for all $1\leq i\leq N$. 
		Similarly, the term $\mathrm{E}_g^3$ can be bounded by using the fact that $g$ is a Lipschitz and bounded function:
		\begin{equation}
			\mathrm{E}_g^3 \leq 48 T B_g^2\left[ d_{\infty}(\eta_{A^N},\eta)\right] ^2.
		\end{equation}
		Finally, we only need to estimate $\mathrm{E}_g^2$. Note that this term can be rewritten by expanding the square as follows:
		\begin{align}
			\mathrm{E}_g^2 & = \frac{48}{N^3}\int_0^t\sum_{i=1}^N\sum_{j=1}^N\sum_{k=1}^N\mathbb{E}\left[ \left( A^N_{i,j}g(X_{\frac{i}{N}}(s),X_{\frac{j}{N}}(s))-\int_\mathbb{X}g(X_{\frac{i}{N}}(s),y)A^N_{i,j}~\mu_{\frac{j}{N},s}(\txtd y)\right)\right. \nonumber \\
			& \left.\left(A^N_{i,k} g(X_{\frac{i}{N}}(s),X_{\frac{k}{N}}(s))-\int_\mathbb{X}g(X_{\frac{i}{N}}(s),y)A^N_{i,k}~\mu_{\frac{k}{N},s}(\txtd y)\right)\right]\txtd s.
		\end{align}
		We observe that, due to the independence of the $X_{\frac{i}{N}}$ as constructed in \eqref{indep}, all terms will be 0 except when $k=j$ or $k=i$. Therefore, by making use of the boundedness of $g$ we obtain
		\begin{equation*}
			\mathrm{E}_g^2 \leq \frac{96}{N}B_g^2\int_0^t\sum_{i=1}^N\frac{1}{N^2}\sum_{j=1}^N(A^N_{i,j})^2.
		\end{equation*}
		Using that $\sum_{j=1}^N(A^N_{i,j})^2=\mathcal{O}(N)$, for all $1\leq i\leq N$, we have
		\begin{equation*}
			\mathrm{E}_g^2 \leq \frac{96}{N}B_g^2T\mathcal{O}(1).
		\end{equation*}
		The bounds for the term $E_h$ are calculated in the same way as the previous term. Putting all the estimates together, we obtain:
		\begin{align*}
			\frac{1}{N}\sum_{i=1}^{N}\mathbb{E}\|X_i^N(t)-X_{\frac{i}{N}}\|^2_{*,t} &\leq \left(16L_f^2+96(L_g^2+L_h^2)\mathcal{O}(1)\right)\int_0^t\frac{1}{N}\sum_{i=1}^{N}\mathbb{E}|X_i^N(s)-X_{\frac{i}{N}}(s)|^2~\txtd s \nonumber \\
			& + \frac{96}{N}(B_g^2+B_h^2)T\mathcal{O}(1)+\frac{16}{N}\sum_{i=1}^{N}\mathbb{E}|X_i^{0}-X_{\frac{i}{N}}^0|^2  \nonumber \\
			&
            +48 T \left(B_g^2\left[ d_{\infty}\left(\eta_{A^N}, \eta\right)\right] ^2+B_h^2\left[ d_{\infty}\left(\eta_{\hat{A}^N}, \hat{\eta}\right)\right] ^2\right).
		\end{align*}
		Gr\"onwall's inequality leads to
		\begin{align*}
			\frac{1}{N}\sum_{i=1}^{N}\mathbb{E}\|X_i^N(t)-&X_{\frac{i}{N}}\|^2_{*,t} \nonumber\leq C\frac{16}{N}\sum_{i=1}^{N}\mathbb{E}|X_i^{0}-X_{\frac{i}{N}}^0|^2 \\
			&+C \left( \frac{2}{N}(B_g^2+B_h^2)\mathcal{O}(1)+B_g^2\left[ d_{\infty}\left(\eta_{A^N}, \eta\right)\right] ^2+B_h^2\left[ d_{\infty}\left(\eta_{\hat{A}^N}, \hat{\eta}\right)\right] ^2 \right) \nonumber,
		\end{align*}
	where $C=48Te^{(16L_f^2+96(L_g^2+L_h^2)\mathcal{O}(1))T}$. 
		\end{proof}
		
		To prove Theorem \ref{teorema}, we have to demonstrate that the empirical measure $\mu_N=\frac{1}{N}\sum_{i=1}^N\delta_{X_i^N}$ converges in probability to $\bar{\mu}=\int_I \mu_u du$. 
		
		
	\begin{proof}(of Theorem~\ref{teorema}) 
		The convergence in probability, follows from the convergence for both random variables: $U$ and the one generated by the Brownian motion. To do this, it is sufficient to establish that:
		\begin{equation}
			\lim_{N\rightarrow+\infty} E_u \times \mathbb{E}\left|\int_{\mathbb{X}} f(y)~\bar{\mu}_N(\txtd y) - \int_{\mathbb{X}} f(y)~\bar{\mu}(\txtd y)\right|^2 = 0,
		\end{equation}
		for every bounded and Lipschitz function $f$, where $E_u$ denotes the expectation related to the random variable $U$.
		We know that $\mu_{i/N,t} = \mathcal{L}(X(t)|U=i/N)$, is the probability measure $\bar{\mu}_t$ conditioned on $U=i/N$. Therefore we obtain
		\begin{equation*}
			\begin{aligned}
				&\lim_{N\rightarrow+\infty} E_u \times \mathbb{E}\left|\int_{\mathbb{X}} f(y)~\bar{\mu}_N(\txtd y) - \int_{\mathbb{X}} f(y)~\bar{\mu}(\txtd y)\right|^2 \\
				&\quad \leq \lim_{N\rightarrow+\infty} \frac{1}{N} \sum_{i=1}^N E_u \times\mathbb{E}\left|f(X_i) - f\left(X_{\frac{i}{N}}\right)\right|^2.
			\end{aligned}
		\end{equation*}
		Taking into account the Lipschitz property of f and using Proposition \ref{compare}, we can conclude
		\begin{equation*}
			\begin{aligned}
				&\lim_{N\rightarrow+\infty} E_u \times \mathbb{E}\left|\int_{\mathbb{X}} f(y)~\bar{\mu}_N(\txtd y) - \int_{\mathbb{X}} f(y)~\bar{\mu}(\txtd y)\right|^2 \\
				&\quad \leq \lim_{N\rightarrow+\infty} \frac{1}{N} \sum_{i=1}^N E_u \times \mathbb{E}\left|X_i - X_{\frac{i}{N}}\right|_{*,t}^2 = 0.
			\end{aligned}
		\end{equation*}
   This finishes the proof of Theorem \ref{teorema}.
		\end{proof}

\section{Proof of Theorem \ref{teorema2}}

 Finally, let us prove Theorem \ref{teorema2}. Unlike before, since our probability measure will now take actual input values $u\in I$, instead of considering $u$ as a random variable, we will work in the space $\tilde{\mathcal{N}}$, in which we must prove the measurability with respect to $u$. The overall proof idea of Theorem~\ref{teorema2} will follow very similar ideas as before, yet certain details that need to be modified in some proofs due to working with measurability in the variable $u$. Therefore, in this section, we will not go into detail in all the proofs but will only introduce those steps that are different from the previous ones. Let us begin by demonstrating the existence and uniqueness of the solution to \eqref{indep2}.
 
 \begin{prop}
 	Under de assumptions $H$ and $\tilde{H}$, there exist a unique solution $X_u$, for $u\in I$, to \eqref{indep2}, where $\mu_{t}=(\mathcal{L}(X_u):u\in I)\in\tilde{\mathcal{N}}$. Moreover, $\mu_{u,t}$ is a weak solution of the Vlasov-Fokker-Plank equation:
 	\begin{equation}
 		\begin{split}
 			\partial_t {\mu}_{u,t} &+ \partial_x \left({\mu}_{u,t} f(x) + {\mu}_{u,t} \int_I \int_{\mathbb{X}} g(x,y) ~\mu_{v,t}(\txtd y) \eta^u(\txtd v)\right)=\\
 			&+\frac{1}{2}\partial^2_x\left(  {\mu}_{u,t} \left[ \int_I \int_{\mathbb{X}} h(x,y) ~\mu_{v,t}(\txtd y) \hat{\eta}^u(\txtd v)\right] ^2\right), \forall u\in I .
 		\end{split}
 		\label{ito2}
 	\end{equation}
 \end{prop}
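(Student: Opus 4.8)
The plan is to adapt the fixed-point argument of Proposition~\ref{propindep}, but now working in the space $\tilde{\mathcal{N}}$ equipped with $W^{\mathcal{N},\infty}_{2,t}$, so that the measurability of $u\mapsto\mu_u$ is preserved along the Picard iteration and passes to the limit. I would define the same operator $\mathcal{F}\colon\tilde{\mathcal{N}}\to\tilde{\mathcal{N}}$ sending $\mu=(\mu_v,v\in I)$ to the law $(\mathcal{L}(X^\mu_u),u\in I)$ of the solution of \eqref{eq2} (with $\eta_u$ in place of $\eta^u$, i.e.\ the deterministic-fiber version). Three things must be checked: (i) for fixed $\mu\in\tilde{\mathcal{N}}$ the SDE \eqref{eq2} has a unique strong solution for each $u$ and the family $(\mathcal{L}(X^\mu_u))_u$ lies in $\tilde{\mathcal{N}}$; (ii) $\mathcal{F}$ is a contraction (after iteration) in $W^{\mathcal{N},\infty}_{2,t}$; (iii) measurability in $u$ is inherited by the fixed point.

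For (i), the Picard scheme \eqref{induction} applied fiberwise gives, exactly as in Proposition~\ref{propindep}, the bound $\mathbb{E}\|X^{n+1}_u-X^n_u\|^2_{*,t}\leq C^n\tfrac{T^n}{n!}\mathbb{E}\|X^1_u-X^0_u\|^2_{*,T}$ with $C=12(L_f^2+L_g^2\|\eta\|^2+L_h^2\|\hat\eta\|^2)$; the key point is that $C$ and the bound on $\mathbb{E}\|X^1_u-X^0_u\|^2_{*,T}$ are uniform in $u\in I$ (using boundedness of $f,g,h$ and $\sup_u\int\|x\|^2_{*,T}\mu_u(\txtd x)<\infty$), so the convergence is uniform in $u$ and $\sup_u\mathbb{E}\|X^\mu_u\|^2_{*,T}<\infty$, placing $\mathcal{F}(\mu)$ in $\tilde{\mathcal{N}}$. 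For (ii), the computation bounding $\mathbb{E}|X^\mu_u-X^\nu_u|^2$ in Proposition~\ref{propindep} goes through verbatim fiber-by-fiber, yielding \eqref{desuninon} with $W^{\mathcal{N},\infty}$ on both sides; iterating gives $W^{\mathcal{N},\infty}_{2,T}(\mathcal{F}^{n+1}(\nu),\mathcal{F}^n(\nu))\leq\tfrac{C^nT^n}{n!}W^{\mathcal{N},\infty}_{2,T}(\mathcal{F}(\nu),\nu)$, so $\{\mathcal{F}^n(\nu)\}$ is Cauchy in the complete metric space $(\tilde{\mathcal{N}},W^{\mathcal{N},\infty}_{2,T})$ and converges to the unique fixed point $\mu=(\mathcal{L}(X_u),u\in I)$, which solves \eqref{indep2}.

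The step I expect to be the main obstacle is (iii), the measurability of $u\mapsto\mathcal{L}(X_u)$ — this is precisely where Assumptions~$(\tilde H)$ enter and where the argument genuinely departs from Proposition~\ref{propindep}. I would argue inductively along the Picard iteration: start from $u\mapsto X^0_u(0)$, whose law $\bar\mu^0_u$ is measurable and indeed $\alpha$-Lipschitz in $u$ by $(\tilde H)$; then show that if $u\mapsto\mathcal{L}(X^n_u)$ is measurable, the coefficients $u\mapsto f(\cdot)$, $u\mapsto\int_I\int_{\mathbb{X}}g(\cdot,y)\mu_{v,s}(\txtd y)\eta_u(\txtd v)$, $u\mapsto\int_I\int_{\mathbb{X}}h(\cdot,y)\mu_{v,s}(\txtd y)\hat\eta^u(\txtd v)$ depend measurably on $u$ (using that $\eta,\hat\eta\in\mathcal{B}(I,\mathcal{M}_+(I))$ are measurable fiber maps and $g,h$ are bounded continuous), and that the solution map of an SDE is measurable in its coefficients and initial law; hence $u\mapsto X^{n+1}_u$, and its law, is measurable. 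Since $W^{\mathcal{N},\infty}_{2,T}$-convergence implies fiberwise $W_{2,T}$-convergence, measurability is preserved under the limit, so the fixed point $u\mapsto\mu_u$ is measurable and $\mu_t\in\tilde{\mathcal{N}}$. One could alternatively, and perhaps more cleanly, show directly from \eqref{desuninon}-type estimates that $u\mapsto\mu_u$ is itself Hölder/Lipschitz continuous by comparing $X_{u_1}$ and $X_{u_2}$: the difference of coefficients involves $\|\eta\|$ and the modulus of continuity of $u\mapsto\eta^u$ together with $W_2(\bar\mu^0_{u_1},\bar\mu^0_{u_2})\leq\alpha|u_1-u_2|$ from $(\tilde H)$, which after Grönwall gives continuity of $u\mapsto\mu_u$ and in particular measurability.

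Finally, the weak-formulation claim \eqref{ito2} follows exactly as in the Lemma after Proposition~\ref{propindep}: apply It\^o's formula to $\phi(X_u(t))$ for $\phi\in C^\infty_{\mathrm{c}}(\mathbb{X})$, take expectations so the martingale term vanishes, and integrate by parts in $x$; since here $\mu_{u,t}=\mathcal{L}(X_u(t))$ directly (no conditioning), the identity holds for every $u\in I$ rather than for a.e.\ sample of $U$.
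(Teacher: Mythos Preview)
Your proposal is correct and follows the same fixed-point architecture as the paper: define $\tilde{\mathcal{F}}$ on $\tilde{\mathcal{N}}$, show well-posedness of the Picard iterates uniformly in $u$, derive the contraction estimate \eqref{desuninon} with $W^{\mathcal{N},\infty}$ on both sides, and conclude by completeness. The Vlasov--Fokker--Planck identification via It\^o's formula is also identical.

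The one place where the paper is more concrete than your sketch is exactly the step you flag as the obstacle, the measurability of $u\mapsto\mathcal{L}(X^n_u)$. Where you invoke the general fact that ``the solution map of an SDE is measurable in its coefficients and initial law'', the paper supplies a hands-on proof: for each Picard level $N$ it introduces a time-discretized process $X^{N,\delta}_u$ (step size $\delta$), observes that $X^{N,\delta}_u(t)=\gamma(u,X^{N-1}_u,B^u)$ for a Carath\'eodory function $\gamma$ (continuous in the path arguments, measurable in $u$), hence jointly measurable, and then lets $\delta\to 0$ using convergence in probability. This is the substantive content behind the principle you quote. Your alternative route---proving Lipschitz continuity of $u\mapsto\mu_u$ directly from a Gr\"onwall estimate---is in fact carried out in the paper, but as a \emph{separate} result (Proposition~\ref{propdgm2}) after existence is in hand; note also that it yields continuity only through $d_{BL}(\eta^{u_1},\eta^{u_2})$ and $d_{BL}(\hat\eta^{u_1},\hat\eta^{u_2})$, so it requires $u\mapsto\eta^u,\hat\eta^u$ to be continuous and does not by itself cover the general case $\eta,\hat\eta\in\mathcal{B}(I,\mathcal{M}_+(I))$. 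For that reason the discretization argument is the more robust way to establish measurability inside the existence proof.
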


\begin{proof}
	The proof will be carried out in the same way as we did in Proposition \ref{propindep}. We will consider an operator $\tilde{\mathcal{F}}$ defined in the space $\mathcal{L}$ and seek a fixed point. To do this, let us consider the mapping $\mu \in \tilde{\mathcal{N}} \mapsto \tilde{\mathcal{F}}(\mu) \in \tilde{\mathcal{N}}$, where $\tilde{\mathcal{F}}(\mu)$ is the law associated with the solution of the equation:
	\begin{equation}
		\begin{split}
			X^\mu_u(t)=&X^\mu_u(0)+\int_{0}^{t}f(X^\mu_u(s))~\txtd s+\int_{0}^t\int_I\int_{\mathbb{X}} g(X^\mu_u(s),y)~\mu_{v,s}(\txtd y)\eta^u(\txtd v)\txtd s\\
			&+ \int_{0}^t\int_I\int_{\mathbb{X}} h(X^\mu_u(s),y)~\mu_{v,s}(\txtd y)\hat{\eta}^u(\txtd v) \txtd v_s^u.
		\end{split}
		\label{eq3}
	\end{equation}
	Note that if we have a fixed point then $\tilde{\mathcal{F}}(\mu)=\mathcal{L}(X^\mu)=\mu$, so this would prove the existence of solution. 
	First, we must prove that the operator is well-defined, meaning that there exists a solution for \eqref{eq3}, for every $\mu \in \tilde{\mathcal{N}}$, and furthermore, the solutions are in $\tilde{\mathcal{N}}$.
	To do this, let us take $\mu\in\mathcal{N}$, and let $X^0_u(t)=X_u(0)$, $\forall t\in[0,T]$ and $u\in I$. Consider the following recurrence equation:
	\begin{eqnarray}
		X^n_u(t)\hspace{-0.25cm}&=\hspace{-0.25cm}&X^{n-1}_u(0)+\int_{0}^{t}f(X^{n-1}_u(s))~\txtd s+\int_{0}^t\int_I\int_{\mathbb{X}} g(X^{n-1}_u(s),y)~\mu_{v,s}(\txtd y)\eta^u(\txtd v)\txtd s\nonumber\\
		&&+\int_{0}^t\int_I\int_{\mathbb{X}} h(X^{n-1}_u(s),y)~\mu_{v,s}(\txtd y)\hat{\eta}^u(\txtd v)\txtd B_s^u,
		\label{induction2}
	\end{eqnarray}
	where  $X^k_u(0)=X^0_u(0)$, for all $k\geq 1$. Notice that if we prove that the sequence $\left\lbrace X^n_u\right\rbrace_{n\geq1}$ is Cauchy, and moreover, for each $n\in \mathbb{N}$, $\mathcal{L}(X^n_u:u\in I)\in\tilde{\mathcal{N}}$, then we will have that the sequence has a limit, and this limit belongs to $\tilde{\mathcal{N}}$ since this space is complete with the Wasserstein distance $W_{2, t}^{\tilde{\mathcal{N}}, \infty}$.
	
	The proof that $\left\lbrace X^n_u\right\rbrace_{n\geq0}$ is Cauchy, is identical to the one carried out in Proposition \ref{propindep}, therefore, we will not develop it. Let us focus on proving that for each $n\in\mathbb{N}$, the mapping $u\in I\mapsto \mathcal{L}(X^n_u,B^u)$ is measurable. We will prove it by induction. By construction and assumptions $\tilde{H}$, measurability holds for $n=0$. Suppose it holds for $n=0, \ldots, N-1$. Let us now establish that measurability holds for $n=N$. To show this, it is sufficient to show that it is measurable for any collection of times, that is,
	$$
	I \ni u \mapsto \mathcal{L}\left({X}_u^N\left(t_1\right), B^u\left(t_1\right), \ldots, {X}_u^N\left(t_m\right), B^u\left(t_m\right)\right) \in \mathcal{P}\left(\left(\mathbb{R}^d \times \mathbb{R}^d\right)^m\right).
	$$
	is measurable for all $0 \leq t_1 \leq \cdots \leq t_m \leq T$ and $m \in \mathbb{N}$. It further suffices to demonstrate that
	$$
	I \ni u \mapsto \mathbb{E}\left[\prod_{i=1}^m\left(\alpha_i\left({X}_u^N\left(t_i\right)\right) \beta_i\left(B^u\left(t_i\right)\right)\right)\right] \in \mathbb{R},
	$$
	is measurable, for all $0 \leq t_1 \leq \cdots \leq t_m \leq T, m \in \mathbb{N}$ and bounded and continuous functions $\left\{\alpha_i, \alpha_i: i=1, \ldots, m\right\}$ on $\mathbb{R}^d$. 
Let us now establish that $X_u^N(t)$ is measurable. To achieve this, consider $X_u^{N,\delta}(t)$ as a solution to the following auxiliary process
	$$
	\begin{aligned}
		{X}_u^{N, \delta}(t)= & {X}_u^{N-1}(0)+\int_{0}^{t}f({X}_u^{N-1}\left(\left\lfloor\frac{s}{\delta}\right\rfloor \delta\right))~\txtd s\\
		&+\int_0^t \int_I \int_{\mathbb{X}} g\left({X}_u^{N-1}\left(\left\lfloor\frac{s}{\delta}\right\rfloor \delta\right), y\right)  ~\mu_{v,\left\lfloor\frac{s}{\delta}\right\rfloor \delta}(d y)\eta^u(\txtd v) \txtd  s \\
		& +\int_0^t \int_I \int_{\mathbb{X}} h\left({X}_u^{N-1}\left(\left\lfloor\frac{s}{\delta}\right\rfloor \delta\right), x\right) ~\mu_{v,\left\lfloor\frac{s}{\delta}\right\rfloor \delta}(d x) \hat{\eta}^u(\txtd v) d B^u_s,
	\end{aligned}
	$$
	where $\delta \in(0,1)$. Note that, due to the construction of the auxiliary process, for each $\delta > 0$, we can express $X_u^{N,\delta}(t)$ as a finite sum of terms that depend on 
   \begin{equation*}
   \left\{{X}_u^{N-1}(0), {X}_u^{N-1}(\delta), \ldots, {X}_u^{N-1}\left(\left\lfloor\frac{t}{\delta}\right\rfloor \delta\right)\right\}\quad \text{and}\quad  \left\{B^u_0, B^u_\delta, \ldots, B^u_{\left(\left\lfloor\frac{t}{\delta}\right\rfloor \delta\right)}\right\}.
   \end{equation*}
 Furthermore, ${X}_u^{N, \delta}(t)$ converges to ${X}_u^k(t)$ in probability as $\delta \rightarrow 0$, for each $u \in I$. So it suffices to prove that
	$$
	I \ni u \mapsto \mathbb{E}\left[\prod_{i=1}^m\left(\alpha_i\left({X}_u^{k, \delta}\left(t_i\right)\right) \beta_i\left(B^u_{t_i}\right)\right)\right] \in \mathbb{R},
	$$
	is measurable, for all $0 \leq t_1 \leq \cdots \leq t_m \leq T, m \in \mathbb{N}$ and bounded and continuous functions $\left\{\alpha_i, \beta_i: i=1, \ldots, m\right\}$ on $\mathbb{R}^d$. Fix $t \in[0, \bar{T}]$. Since the measurability holds for $N-1$, it further suffices to show that
	$$
	{X}_u^{N, \delta}(t)=\gamma\left(u, {X}_u^{N-1}, B^u\right),
	$$
	for some measurable function $\gamma: I \times \mathcal{C}([0,T]) \times \mathcal{C}([0,T]) \rightarrow \mathbb{R}$. Using that ${X}_u^{N, \delta}(t)$ is a finite sum of terms depending on $\left\{{X}_u^{N-1}(0), {X}_u^{N-1}(\delta), \ldots, {X}_u^{N-1}\left(\left\lfloor\frac{t}{\delta}\right\rfloor \delta\right)\right\}$ and $\left\{B^u_0, B^u_\delta, \ldots, B^u_{\left(\left\lfloor\frac{t}{\delta}\right\rfloor \delta\right)}\right\}$ continuously, we have that $\gamma(u, \cdot, \cdot)$ is continuous on $\mathcal{C}([0,T]) \times \mathcal{C}([0,T])$, for each $u \in I$, and that $h(\cdot, x, w)$ is measurable on $I$ for each $(x, w) \in \mathcal{C}([0,T]) \times \mathcal{C}([0,T])$. Therefore, $\gamma$ is measurable and this proves the mesurability for $n=N$.
	In summary, we have thus proven that the operator $\tilde{\mathcal{F}}$ is well-defined. Now, we need to analyze the existence of a fixed point for the operator. However, this is nothing more than proving that given two laws $\mu$ and $\nu$ in $\tilde{\mathcal{N}}$, it holds that:
	\begin{equation}
		|W^{\tilde{\mathcal{N}},\infty}_{2,t}(\mathcal{F}(\mu),\mathcal{F}(\nu))|^2 \leq C\int_{0}^{t} |W^{\tilde{\mathcal{N}},\infty}_{2,s}(\mu,\nu)|^2 ~ \txtd s,
	\end{equation}
	for a certain constant $C > 0$. The proof of this inequality is similar to the one carried out in Proposition \ref{propindep}, so we omit the details.	
	The last inequality gives the path-wise uniqueness of the solution, and also allow us to prove the existence of solution. For this purpose we will build an iterative process, as follows. Consider $\nu=(\mathcal{L}(Z_u): u\in I)$, where $Z_u(t)=X_u(0)$ for all $u\in I $ and $t\in[0,T]$. Iterating this, and using \eqref{desuninon}, we get:
	$$W^{\tilde{\mathcal{N}},\infty}_{2,T}(\tilde{\mathcal{F}}^{n+1}(\nu),\tilde{\mathcal{F}}^n(\nu))\leq \frac{C^nT^n}{n!} |W^{\tilde{\mathcal{N}},\infty}_{2,T}(\tilde{\mathcal{F}}(\nu),\nu)|.$$
	It follows that the sequence $\left\lbrace\tilde{\mathcal{F}}^n(\nu)\right\rbrace_n$ is Cauchy for $n$ large enough, where we have used that $W^{\tilde{\mathcal{N}},\infty}_{2,T}(\tilde{\mathcal{F}}(\nu),\nu)<\infty$, due to the assumptions on the initial data and the fact that the functions $f$, $g$, and $h$ are bounded.
	This sequence will have a limit since $\tilde{\mathcal{N}}$ is a complete metric space, and hence there exists $\mu=(\mathcal{L}(X_u),u\in I)\in \tilde{\mathcal{N}}$ solution of \eqref{indep}, for the initial data $X_u(0)$.
\end{proof}

Once the existence and uniqueness of the solution to \eqref{indep2} have been established, the next step is to study how these solutions depend on the graph. In other words, what properties the solutions have with respect to the DGMs: $\eta$ and $\hat{\eta}$, and with respect to the graph's heterogeneity variable $u$. For this purpose, we have the following result.

\begin{prop}\label{propdgm2}
		Given $\hat{\eta}_1, \hat{\eta}_2 \in \mathcal{BC}(I, \mathcal{M}_+(I))$, let $\mu_1$ and $\mu_2$ be the laws of the solutions of \eqref{indep2} for the DGMs $\hat{\eta}_1$ and $\hat{\eta}_2$. Then,
	\begin{equation}
		\left[ W_{2,s}^{\tilde{\mathcal{N}},\infty}(\mu^1,\mu^2)\right]^2\leq \hat{C}_1\hat{C}_2\left[ d_\infty(\hat{\eta}_1,\hat{\eta}_2)\right] ^2 e^{\hat{C}_1T},
	\end{equation}
	where $\hat{C}_1 =3\left[ 2B_g^2\|\eta\|^2+3B_h^2\|\hat{\eta}_1\|^2\right]e^{\left(3L_f^2+3\left[ 2L_g^2\|\eta\|^2+3L_h^2\|\hat{\eta}_1\|^2\right] \right) T}$ and \newline $\hat{C}_2=\frac{3B^2_hT}{\left[ 2B_g^2\|\eta\|^2+3B_h^2\|\hat{\eta}_1\|^2\right]}$.
	
	Given $\eta_1, \eta_2 \in \mathcal{BC}(I, \mathcal{M}_+(I))$, let $\mu_1$ and $\mu_2$ be the laws of the solutions of \eqref{indep2} for the DGMs $\eta_1$ and $\eta_2$. Then,
	\begin{equation}
		\left[ W_{2,s}^{\tilde{\mathcal{N}},\infty}(\mu^1,\mu^2)\right]^2\leq C_1C_2\left[ d_\infty(\eta_1,\eta_2)\right] ^2 e^{C_1T},
	\end{equation}
	where $C_1 =3\left[ 3B_g^2\|\eta_1\|^2+2B_h^2\|\hat{\eta}\|^2\right]e^{\left(3L_f^2+3\left[ 3L_g^2\|\eta_1\|^2+2L_h^2\|\hat{\eta}\|^2\right] \right) T}$ and \newline $C_2=\frac{3B^2_gT}{\left[ 3B_g^2\|\eta_1\|^2+2B_h^2\|\hat{\eta}\|^2\right]}$.
	
	Given $u_1,u_2\in I$, let $\mu_u$ be the law of the solution of \eqref{indep2}. Then,
	\begin{align*}
		\left[ W_{2,T}(\mu_{u_1},\mu_{u_2})\right] ^2&\leq C\left( \left[ W_{2}(\bar{\mu}^0_{u_1},\bar{\mu}^0_{u_2})\right] ^2\right.\\ 
		&\left.+2T\left[ B_g^2d^2_{BL}(\eta^{u_1},\eta^{u_2})+B_h^2d^2_{BL}(\hat{\eta}^{u_1},\hat{\eta}^{u_2})\right] \right),
	\end{align*}
	where $C=16e^{16\left(L^2_f+2L^2_g|\eta|^2+2L^2_h|\hat{\eta}|^2 \right)T}$.
\end{prop}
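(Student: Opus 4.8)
The plan is to run the synchronous--coupling plus Gr\"onwall scheme already used for Proposition~\ref{propindep}, but now comparing two different label values rather than two different DGMs. Fix $u_1,u_2\in I$ and realize the two solutions $X_{u_1},X_{u_2}$ of \eqref{indep2} on one probability space by driving them with the \emph{same} Brownian motion and choosing $(X_{u_1}(0),X_{u_2}(0))$ to be an optimal coupling for $W_2$ of $\bar{\mu}^0_{u_1}$ and $\bar{\mu}^0_{u_2}$, so that $\mathbb{E}|X_{u_1}(0)-X_{u_2}(0)|^2=W_2(\bar{\mu}^0_{u_1},\bar{\mu}^0_{u_2})^2$; this is legitimate because $u\mapsto\bar{\mu}^0_u$ is measurable and $\alpha$-Lipschitz by Assumptions~$(\tilde{H})$. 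Note that the solution family $(\mu_{v,s})_{v\in I}\in\tilde{\mathcal{N}}$ appearing inside the drift and diffusion is the \emph{same} in both equations; only the evaluation point $X_{u_i}(s)$ and the fibers $\eta^{u_i},\hat{\eta}^{u_i}$ differ. Since the above is a particular coupling of $\mu_{u_1}$ and $\mu_{u_2}$, we have $W_{2,T}(\mu_{u_1},\mu_{u_2})^2\le\mathbb{E}\|X_{u_1}-X_{u_2}\|_{*,T}^2$, so it suffices to estimate the right-hand side.

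Subtracting the two copies of \eqref{indep2}, taking $\sup_{s\le t}$, squaring, using $|a_1+\dots+a_4|^2\le 4\sum_i|a_i|^2$ and taking expectations, we split $\mathbb{E}\|X_{u_1}-X_{u_2}\|_{*,t}^2$ into an initial-data term (equal to $4W_2(\bar{\mu}^0_{u_1},\bar{\mu}^0_{u_2})^2$ by the chosen coupling), an $f$-drift term, a $g$-drift term and an $h$-diffusion term; the diffusion term is handled via the Burkholder-Davis-Gundy inequality (extra factor $4$) and the two drift terms by Jensen in time. The $f$-term is, up to a numerical constant, $L_f^2\int_0^t\mathbb{E}\|X_{u_1}-X_{u_2}\|_{*,s}^2\,\txtd s$. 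In the $g$-term one adds and subtracts $\int_I\int_{\mathbb{X}}g(X_{u_2}(s),y)\,\mu_{v,s}(\txtd y)\,\eta^{u_1}(\txtd v)$, obtaining (a) a diagonal piece $\int_I\int_{\mathbb{X}}[g(X_{u_1}(s),y)-g(X_{u_2}(s),y)]\,\mu_{v,s}(\txtd y)\,\eta^{u_1}(\txtd v)$, bounded in modulus by $L_g\|\eta\|\,|X_{u_1}(s)-X_{u_2}(s)|$ because each $\mu_{v,s}$ is a probability measure and $\eta^{u_1}(I)\le\|\eta\|$; and (b) a fiber piece $\int_I\Phi_s(v)\,(\eta^{u_1}-\eta^{u_2})(\txtd v)$ with $\Phi_s(v):=\int_{\mathbb{X}}g(X_{u_2}(s),y)\,\mu_{v,s}(\txtd y)$, $\|\Phi_s\|_\infty\le B_g$, controlled by $d_{BL}(\eta^{u_1},\eta^{u_2})$ times the bounded-Lipschitz norm of $\Phi_s$. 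The $h$-term is decomposed identically, giving a diagonal piece $\le L_h\|\hat{\eta}\|\,|X_{u_1}(s)-X_{u_2}(s)|$ and a fiber piece controlled by $d_{BL}(\hat{\eta}^{u_1},\hat{\eta}^{u_2})$ and $B_h$. Collecting everything yields an inequality of the form
$$\mathbb{E}\|X_{u_1}-X_{u_2}\|_{*,t}^2\le c_0\Big(W_2(\bar{\mu}^0_{u_1},\bar{\mu}^0_{u_2})^2+T\big[B_g^2\,d_{BL}^2(\eta^{u_1},\eta^{u_2})+B_h^2\,d_{BL}^2(\hat{\eta}^{u_1},\hat{\eta}^{u_2})\big]\Big)+c_1\int_0^t\mathbb{E}\|X_{u_1}-X_{u_2}\|_{*,s}^2\,\txtd s,$$
with $c_1$ a numerical multiple of $L_f^2+L_g^2\|\eta\|^2+L_h^2\|\hat{\eta}\|^2$, and Gr\"onwall's inequality then gives the claimed bound; the explicit constant $C=16e^{16(L_f^2+2L_g^2|\eta|^2+2L_h^2|\hat{\eta}|^2)T}$ is obtained by tracking the combinatorial factors exactly as in Proposition~\ref{propindep}.

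The only genuinely new point — and the one needing care — is step (b): to pass from $\int_I\Phi_s(v)\,(\eta^{u_1}-\eta^{u_2})(\txtd v)$ to a bound in terms of $d_{BL}(\eta^{u_1},\eta^{u_2})$ one needs $\Phi_s$, and its $h$-analogue, to be bounded \emph{and} Lipschitz in the label variable $v$, and the Lipschitz modulus of $v\mapsto\Phi_s(v)$ is at most $L_g$ times that of $v\mapsto\mu_{v,s}$ in $W_2$ (by the Lipschitz bound on $g$ and Remark~\ref{warsineq}). Hence the estimate is self-referential in the same way the existence proof is: the cleanest way to close it is to propagate the Lipschitz-in-$u$ property through the Picard iteration $X^n_u$ (we already know $u\mapsto\mathcal{L}(X^n_u)$ is measurable; the same induction together with the contraction estimate \eqref{desuninon} shows it is Lipschitz with a modulus bounded uniformly in $n$), and then pass to the limit so that the constants do not blow up; alternatively one first gets mere continuity of $v\mapsto\mu_{v,s}$ from the boundedness of $g,h$ and bootstraps. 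In any case the displayed Gr\"onwall inequality is what finishes the argument, and all remaining manipulations are identical to those in Propositions~\ref{propindep} and~\ref{propdgm}.
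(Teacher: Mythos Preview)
Your scheme is the same as the paper's: it too couples $X_{u_1}$ and $X_{u_2}$ through a common Brownian motion (phrased there as replacing \eqref{proofcm} by \eqref{proofcm2} ``by the coupling method''), splits into initial data, $f$-, $g$- and $h$-pieces, adds and subtracts inside the $g$- and $h$-integrals to separate a diagonal Lipschitz part from a fiber part, and closes with BDG plus Gr\"onwall. The only cosmetic difference is that the paper takes the infimum over couplings of the initial data at the end, whereas you fix an optimal $W_2$-coupling of $(\bar\mu^0_{u_1},\bar\mu^0_{u_2})$ at the outset; both give the same bound.

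Where you go beyond the paper is step~(b). The paper simply bounds the fiber contribution by $8TB_g^2\,d_{BL}^2(\eta^{u_1},\eta^{u_2})$ ``using the definition of bounded Lipschitz distance'', i.e.\ it tacitly treats $v\mapsto\Phi_s(v)=\int_{\mathbb X}g(X_{u_1}(s),y)\,\mu_{v,s}(\txtd y)$ as lying in $\mathcal{BL}_1(I)$ up to the factor $B_g$, without ever checking Lipschitz dependence on $v$. You correctly observe that this regularity of $v\mapsto\mu_{v,s}$ is precisely what is being proved, so the paper's argument is, as written, circular (the same unflagged issue already appears in Proposition~\ref{propdgm}). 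Your proposed cure---propagate a Lipschitz-in-$u$ modulus through the Picard iterates and pass to the limit---is the right idea; just be aware that the resulting recursion for the modulus has the shape $K_{n+1}^2\lesssim \alpha^2+c\,(1+K_n)^2$, so keeping it bounded uniformly in $n$ typically needs a short-time argument iterated over subintervals, or a two-step bootstrap (first continuity, then the quantitative bound). The paper does not carry any of this out.
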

\begin{proof}
	The proof of the first two results is analogous to the one carried out in Proposition \ref{propdgm}, so we will not go into detail. Just note that since the structure of the two equations, \eqref{indep} and \eqref{indep2}, is analogous, the bounds will be the same thanks to the relationship between the measures $W^{.,2}$ and $W^{.,\infty}$ and the fact that the set over which $u$ takes variables is the interval $I=[0,1]$.
	
	Let $u_1,u_2\in I$, and let $\mu$ be the law of the solution to \eqref{indep2}. We know that for each $u\in I$, $X_u$ satisfies the equation:
	\begin{align}
		X^u(t) =& X^u(0) + \int_{0}^{t} f(X^u(s)) ~\txtd s + \int_{0}^t \int_I \int_{\mathbb{X}} g(X^u(s), y) ~\mu_{v, s}(\txtd y)\eta^u(\txtd v) \txtd s\nonumber \\
		&+ \int_{0}^t \int_I \int_{\mathbb{X}} h(X^u(s), y)~\mu_{v, s}(\txtd y)\hat{\eta}^u(\txtd v) \txtd B_s^u.
		\label{proofcm}
	\end{align}
	To remove the dependence of $B^u_s$ with respect to $u$, we will employ the coupling method (see \cite{ChaintronReview}). This method allows us to relate our stochastic process with the following system, as in the limit both systems satisfy the same dynamics:
	\begin{align}
		X^u(t) =& X^u(0) + \int_{0}^{t} f(X^u(s)) ~\txtd s + \int_{0}^t \int_I \int_{\mathbb{X}} g(X^u(s), y) ~\mu_{v, s}(\txtd y)\eta^u(\txtd v) \txtd s\nonumber \\
		&+ \int_{0}^t \int_I \int_{\mathbb{X}} h(X^u(s), y)~\mu_{v, s}(\txtd y)\hat{\eta}^u(\txtd v) \txtd B_s.
		\label{proofcm2}
	\end{align}
	Therefore, since we are interested in analyzing what happens in the limit, it will be enough to prove the property for the second system. Let us proceed to bound $\mathbb{E}|X_{u_1}(t)-X_{u_2}(t)|^2$:
	\begin{align*}
		\mathbb{E}|X_{u_1}(t)-X_{u_2}(t)|^2\leq 4\mathbb{E}|X_{u_1}(0)-X_{u_2}(0)|^2+4\mathbb{E}\int_{0}^{t} |f(X_{u_1}(s))-f(X_{u_2}(s))|^2~\txtd s\\
		+4\mathbb{E}\int_{0}^t |\int_I \int_{\mathbb{X}} g(X^{u_1}(s), y) ~\mu_{v, s}(\txtd y)\eta^{u_1}(\txtd v)-g(X^{u_2}(s), y) ~\mu_{v, s}(\txtd y)\eta^{u_2}(\txtd v)|^2 \txtd s\\
		+4\mathbb{E}\int_{0}^t |\int_I \int_{\mathbb{X}} h(X^{u_1}(s), y) ~\mu_{v, s}(\txtd y)\eta^{u_1}(\txtd v)-h(X^{u_2}(s), y) ~\mu_{v, s}(\txtd y)\hat{\eta}^{u_2}(\txtd v)|^2 \txtd s.
	\end{align*}
We have used the Hölder inequality and the properties of stochastic integrals. By adding and subtracting terms in the last two integrals, we obtain the following:
		\begin{align*}
		\mathbb{E}|X_{u_1}(t)-X_{u_2}(t&)|^2\leq 4\mathbb{E}|X_{u_1}(0)-X_{u_2}(0)|^2+4\mathbb{E}\int_{0}^{t} |f(X_{u_1}(s))-f(X_{u_2}(s))|^2~\txtd s\\
		&+8\mathbb{E}\int_{0}^t \left|\int_I \int_{\mathbb{X}} \left( g(X^{u_1}(s), y) -g(X^{u_2}(s), y)\right)~  \mu_{v, s}(\txtd y)\eta^{u_1}(\txtd v)\right|^2 \txtd s\\
		&+8\mathbb{E}\int_{0}^t \left|\int_I \int_{\mathbb{X}} g(X^{u_1}(s), y)~ \mu_{v, s}(\txtd y)\left( \eta^{u_1}(\txtd v)-\eta^{u_2}(\txtd v)\right) \right|^2 \txtd s\\
		&+8\mathbb{E}\int_{0}^t \left|\int_I \int_{\mathbb{X}} \left( h(X^{u_1}(s), y) -h(X^{u_2}(s), y)\right)  ~\mu_{v, s}(\txtd y)\hat{\eta}^{u_1}(\txtd v)\right|^2 \txtd s\\
		&+8\mathbb{E}\int_{0}^t \left|\int_I \int_{\mathbb{X}} h(X^{u_1}(s), y) ~\mu_{v, s}(\txtd y)\left( \eta^{u_1}(\txtd v)-\hat{\eta}^{u_2}(\txtd v)\right) \right|^2 \txtd s.
	\end{align*}
Using the definition of bounded Lipschitz distance for the DGM and leveraging the Lipschitz and boundedness properties of the functions $f$, $h$, and $g$, we obtain the following estimate:
	\begin{align*}
		\mathbb{E}|X_{u_1}(t)-X_{u_2}(t)|^2&\leq4\mathbb{E}|X_{u_1}(0)-X_{u_2}(0)|^2\\ &+4\left(L^2_f+2L^2_g\|\eta\|^2+2L^2_h\|\hat{\eta}\|^2 \right) \mathbb{E}\int_{0}^{t} |X_{u_1}(s)-X_{u_2}(s)|^2~\txtd s\\
		&+8T\left[ B_g^2d^2_{BL}(\eta^{u_1},\eta^{u_2})+B_h^2d^2_{BL}(\hat{\eta}^{u_1},\hat{\eta}^{u_2})\right] .
	\end{align*}
Taking the supremum in $s$ of the difference $|X_{u_1}(s) - X_{u_2}(s)|$, and using the BDG inequality, we get:
	\begin{align*}
		\mathbb{E}\|X_{u_1}-X_{u_2}\|_{*,t}^2&\leq16\mathbb{E}|X_{u_1}(0)-X_{u_2}(0)|^2\\ &+16\left(L^2_f+2L^2_g\|\eta\|^2+2L^2_h\|\hat{\eta}\|^2 \right) \mathbb{E}\int_{0}^{t} \|X_{u_1}-X_{u_2}\|_{*,s}^2~\txtd s\\
		&+32T\left[ B_g^2d^2_{BL}(\eta^{u_1},\eta^{u_2})+B_h^2d^2_{BL}(\hat{\eta}^{u_1},\hat{\eta}^{u_2})\right] .
	\end{align*}
Using the Gr\"onwall's inequality yields
	\begin{align*}
		\mathbb{E}\|X_{u_1}-X_{u_2}\|_{*,t}^2&\leq C\left( \mathbb{E}|X_{u_1}(0)-X_{u_2}(0)|^2\right.\\ 
		&\left.+2T\left[ B_g^2d^2_{BL}(\eta^{u_1},\eta^{u_2})+B_h^2d^2_{BL}(\hat{\eta}^{u_1},\hat{\eta}^{u_2})\right] \right) ,
	\end{align*}
	where $C=16e^{16\left(L^2_f+2L^2_g|\eta|^2+2L^2_h|\hat{\eta}|^2 \right)T}$. By the definition of the Wasserstein distance, we have:
	\begin{align*}
		\left[ W_{2,T}(\mu_{u_1},\mu_{u_2})\right] ^2&\leq C\left( \mathbb{E}|X_{u_1}(0)-X_{u_2}(0)|^2\right.\\ 
		&\left.+2T\left[ B_g^2d^2_{BL}(\eta^{u_1},\eta^{u_2})+B_h^2d^2_{BL}(\hat{\eta}^{u_1},\hat{\eta}^{u_2})\right] \right). 
	\end{align*}
As this inequality holds for any random variable $X_u(0)$, if we take the infimum, we get:
	\begin{align*}
		\left[ W_{2,T}(\mu_{u_1},\mu_{u_2})\right] ^2&\leq C\left( \left[ W_{2}(\bar{\mu}^0_{u_1},\bar{\mu}^0_{u_2})\right] ^2\right.\\ 
		&\left.+2T\left[ B_g^2d^2_{BL}(\eta^{u_1},\eta^{u_2})+B_h^2d^2_{BL}(\hat{\eta}^{u_1},\hat{\eta}^{u_2})\right] \right). 
	\end{align*}
Thus, we obtain the desired inequality and conclude the proof.
\end{proof}

After establishing the existence of solution for \eqref{indep2} and studying the properties of it, let us proceed to determine how close the solutions of \eqref{eq1} are to those of \eqref{indep2}.
To accomplish this, as we did in the previous section, we will compare the trajectories of solutions between equation \eqref{eq1} and equation \eqref{indep}, associating the particles $X_i$ with the particles $X_u(i)$. To facilitate this comparison, we will consider a partition of the interval $I$ defined by $I^N_i$, as introduced earlier. Within each interval, we will select a representative $u(i) = \frac{i}{N}$. Consequently, we will compare the trajectories of $X_i^N$ and $X_{\frac{i}{N}}$.
It is important to point out that each one is described by a distinct stochastic process. However, to identify both particles as similar, we make the assumption that for our choice of the random variable $u=\frac{i}{N}$, both particles exhibit the same Brownian motions, i.e., $B^i_t=B^{\frac{i}{N}}_t$.

	\begin{prop}\label{propconv2}
	Under the hypotheses of Theorem \ref{teorema2}, the following inequality
		\begin{align*}
	\frac{1}{N}\sum_{i=1}^{N}\mathbb{E}\|X_i^N(t)-&X_{\frac{i}{N}}\|^2_{*,t} \nonumber\leq C\frac{16}{N}\sum_{i=1}^{N}\mathbb{E}|X_i^{0}-X_{\frac{i}{N}}^0|^2 \\
	&+C \left( \frac{2}{N}(B_g^2+B_h^2)\mathcal{O}(1)+B_g^2\left[ d_{\infty}\left(\eta_{A^N}, \eta\right)\right] ^2+B_h^2\left[ d_{\infty}\left(\eta_{\hat{A}^N}, \hat{\eta}\right)\right] ^2 \right) \nonumber,
\end{align*}
	holds, where $C=48Te^{(16L_f^2+96(L_g^2+L_h^2)\mathcal{O}(1))T}$.
	\label{compare2}
\end{prop}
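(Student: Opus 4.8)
The statement coincides verbatim with Proposition~\ref{compare}, and the plan is to rerun that proof, indicating only the places where the change of ambient space from $\mathcal{N}$ to $\tilde{\mathcal{N}}$ --- so that $\mu_{\frac{j}{N},s}=\mathcal{L}(X_{\frac{j}{N}}(s))$ is now a genuine measurable family of laws rather than the disintegration of a single law with respect to a uniform $U$ --- needs an extra justification. First I would fix $t\in[0,T]$, couple each pair $(X_i^N,X_{\frac{i}{N}})$ by imposing $B^i_t=B^{\frac{i}{N}}_t$ exactly as in Section~\ref{secTheo1}, subtract \eqref{eq1} from \eqref{indep2}, and estimate with $|a_0+a_f+a_g+a_h|^2\le 4(|a_0|^2+|a_f|^2+|a_g|^2+|a_h|^2)$ followed by the Burkholder--Davis--Gundy inequality to obtain
\[
\frac{1}{N}\sum_{i=1}^N\mathbb{E}\|X_i^N(t)-X_{\frac{i}{N}}\|^2_{*,t}\le \mathrm{E}_0+\mathrm{E}_f+\mathrm{E}_g+\mathrm{E}_h,
\]
where $\mathrm{E}_0=\frac{16}{N}\sum_{i=1}^N\mathbb{E}|X_i^{0}-X_{\frac{i}{N}}^0|^2$ is transferred unchanged to the right-hand side, and $\mathrm{E}_f\le L_f^2\frac{16}{N}\sum_{i=1}^N\int_0^t\mathbb{E}|X_i^N(s)-X_{\frac{i}{N}}(s)|^2~\txtd s$ follows from the Lipschitz bound on $f$.

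For $\mathrm{E}_g$ the plan is to insert and subtract $\frac{1}{N}\sum_j A^N_{i,j}g(X_{\frac{i}{N}}(s),X_{\frac{j}{N}}(s))$ and $\frac{1}{N}\sum_j A^N_{i,j}\int_{\mathbb{X}}g(X_{\frac{i}{N}}(s),y)~\mu_{\frac{j}{N},s}(\txtd y)$, so that $\mathrm{E}_g\le\mathrm{E}_g^1+\mathrm{E}_g^2+\mathrm{E}_g^3$. Then $\mathrm{E}_g^1$ is bounded using the Lipschitz property of $g$ together with $\sum_j(A^N_{i,j})^2=\mathcal{O}(N)$; $\mathrm{E}_g^3$ is bounded by the boundedness of $g$ after recognising the measure difference $\eta_{A^N}^{\frac{i}{N}}-\eta^{\frac{i}{N}}$, producing $48TB_g^2[d_\infty(\eta_{A^N},\eta)]^2$; and $\mathrm{E}_g^2$ is the term that needs the new verification. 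Expanding the square yields a triple sum over $i,j,k$, and because the solutions $\{X_{\frac{i}{N}}\}_{i=1}^N$ of \eqref{indep2} can be realised as mutually independent processes --- driven by independent Brownian motions and started from independent initial data sampled from the measurable family $u\mapsto\bar{\mu}^0_u$ --- with $\mu_{\frac{j}{N},s}=\mathcal{L}(X_{\frac{j}{N}}(s))$, every summand with $k\notin\{i,j\}$ has zero expectation; the surviving diagonal contributions are controlled by $B_g^2$ and again $\sum_j(A^N_{i,j})^2=\mathcal{O}(N)$, giving $\frac{96}{N}B_g^2T\mathcal{O}(1)$. The term $\mathrm{E}_h$ is handled identically: It\^o's isometry first converts the stochastic integral into a time integral, after which the same three-way split and the same independence cancellation go through with $h$ and $\hat{A}^N,\hat{\eta}$ in place of $g$ and $A^N,\eta$.

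Collecting the estimates, bounding $|X_i^N(s)-X_{\frac{i}{N}}(s)|^2\le\|X_i^N-X_{\frac{i}{N}}\|^2_{*,s}$ inside the drift integral, one reaches
\[
\frac{1}{N}\sum_{i=1}^N\mathbb{E}\|X_i^N(t)-X_{\frac{i}{N}}\|^2_{*,t}\le\big(16L_f^2+96(L_g^2+L_h^2)\mathcal{O}(1)\big)\int_0^t\frac{1}{N}\sum_{i=1}^N\mathbb{E}\|X_i^N-X_{\frac{i}{N}}\|^2_{*,s}~\txtd s+R_N,
\]
where $R_N$ gathers the initial-data discrepancy $\frac{16}{N}\sum_i\mathbb{E}|X_i^0-X_{\frac{i}{N}}^0|^2$, the $\mathcal{O}(1/N)$ contributions $\frac{96}{N}(B_g^2+B_h^2)T\mathcal{O}(1)$ and the graph-approximation terms $48T\big(B_g^2[d_\infty(\eta_{A^N},\eta)]^2+B_h^2[d_\infty(\eta_{\hat{A}^N},\hat{\eta})]^2\big)$; Gr\"onwall's inequality then yields the asserted bound with $C=48Te^{(16L_f^2+96(L_g^2+L_h^2)\mathcal{O}(1))T}$. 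The only genuine obstacle relative to Proposition~\ref{compare} is the independence cancellation invoked in $\mathrm{E}_g^2$ and $\mathrm{E}_h^2$: one must know both that the $\tilde{\mathcal{N}}$-solutions admit an independent realisation and that all iterated integrals $\int_I\int_{\mathbb{X}}\cdots~\mu_{v,s}(\txtd y)\eta^u(\txtd v)$ are well-defined and measurable in $u$. The former is a construction (independent Brownian motions together with i.i.d.\ samples from the measurable family $u\mapsto\bar{\mu}^0_u$ furnished by Assumptions $\tilde{H}$), and the latter is exactly the measurability property established in the preceding existence proposition, so no new analytic difficulty arises.
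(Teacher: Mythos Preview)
Your proposal is correct and follows exactly the approach the paper takes: the paper's own proof of this proposition consists of a single sentence stating that it is identical to Proposition~\ref{compare} and omitting all details. You have in fact been more careful than the paper, explicitly flagging the independence cancellation in $\mathrm{E}_g^2$ and $\mathrm{E}_h^2$ and the measurability of $u\mapsto\mu_{u,s}$ as the points where the $\tilde{\mathcal{N}}$ setting requires a word of justification.
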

\begin{proof}
	Since the proof of this result is identical to the one carried out in the previous section, we will not go into detail about the proof.
\end{proof}

 We can now proceed to prove Theorem \ref{teorema2}.
\begin{proof}

	We want to prove that the empirical measure $\mu^N$ converges in probability to the measure $\bar{\mu}$. To do this, it is sufficient to establish that:
	\begin{equation*}
		\lim_{N\rightarrow+\infty} \mathbb{E}\left|\int_{\mathbb{X}} k(y)~{\mu}_N(\txtd y) - \int_{\mathbb{X}} k(y)~\bar{\mu}(\txtd y)\right|^2 = 0,
	\end{equation*}
	for every bounded and Lipschitz function $k$. To show that this tends to zero, let us add and subtract some term:
	\begin{align}
		\mathbb{E}\left|\int_{\mathbb{X}} k(y)~{\mu}^N(\txtd y) - \int_{\mathbb{X}} k(y)~\bar{\mu}(\txtd y)\right|^2&\leq  \mathbb{E}\left|\int_{\mathbb{X}} k(y)~{\mu}^N(\txtd y) - \frac{1}{N}\sum_{i=1}^N\int_{\mathbb{X}} k(y)\delta_{X_{\frac{i}{N}}}(y)\right|^2 \nonumber\\
		&+\mathbb{E}\left|\frac{1}{N}\sum_{i=1}^N\left( \int_{\mathbb{X}} k(y)~\delta_{X_{\frac{i}{N}}}(\txtd y) -\int_{\mathbb{X}} k(y)~\mu_{\frac{i}{N}}(\txtd y)\right) \right|^2 \nonumber \\
		&+\mathbb{E}\left|\frac{1}{N}\sum_{i=1}^N\int_{\mathbb{X}} k(y)~\mu_{\frac{i}{N}}(\txtd y) - \int_{\mathbb{X}} k(y)~\bar{\mu}(\txtd y)\right|^2.
		\label{proofineqteo2}
	\end{align}
	Let us see that each term converges to zero. If we expand the first term, we get:
	\begin{align*}
		\mathbb{E}&\left|\int_{\mathbb{X}} k(y)~{\mu}^N(\txtd y) - \frac{1}{N}\sum_{i=1}^N\int_{\mathbb{X}} k(y)~\delta_{X_{\frac{i}{N}}}(y)\right|^2 \nonumber \\
		&\leq\frac{1}{N}\sum_{i=1}^N\mathbb{E}|k(X_i)-k(X_{\frac{i}{N}})|^2.
	\end{align*}
	Using $k$ is Lipschitz and by Proposition \ref{propconv2}, we deduce that it tends to zero. Let us proceed with the second term of \eqref{proofineqteo2}.
	$$
	\mathbb{E}\left|\frac{1}{N}\sum_{i=1}^N\left( \int_{\mathbb{X}} k(y)~\delta_{X_{\frac{i}{N}}}(\txtd y) -\int_{\mathbb{X}} k(y)~\mu_{\frac{i}{N}}(\txtd y)\right) \right|^2 \leq\frac{1}{N}\sum_{i=1}^N\mathbb{E}\left|k(X_{\frac{i}{N}})-\mathbb{E}|k(X_{\frac{i}{N}})|\right|^2.
	$$
	Using that $k$ is bounded, we prove that this term converges to zero. Finally, for the last term of \eqref{proofineqteo2}, we can write it as:
	$$
	\mathbb{E}\left|\frac{1}{N}\sum_{i=1}^N\int_{\mathbb{X}} k(y)~\mu_{\frac{i}{N}}(\txtd y) - \int_I\int_{\mathbb{X}} k(y)~{\mu}_u(\txtd y)du\right|^2.
	$$
	Using Proposition \ref{propdgm2}, which establishes the continuity of $\hat{\mu}_u$ with respect to $u$, allows us to demonstrate that this term tends to zero. Consequently, $\mu^N\rightarrow\bar{\mu}$ in probability.	
\end{proof}

\section{Examples}

As demonstrated earlier, under certain conditions, the empirical measure of the stochastic process \eqref{eq1} converges to the measure $\bar{\mu}(t,x)$ that satisfies the differential equation:
\begin{equation}
		\begin{split}
			\partial_t \bar{\mu}_{t} &+ \partial_x \left(\bar{\mu}_{t} f(x) + \int_I{\mu}_{u,t} \int_I \int_{\mathbb{X}} g(x,y) ~\mu_{v,t}(\txtd y) \eta^u(\txtd v) \txtd u\right)=\\
			&+\frac{1}{2}\partial^2_x\left( \int_I {\mu}_{u,t} \left[ \int_I \int_{\mathbb{X}} h(x,y) ~\mu_{v,t}(\txtd y) \hat{\eta}^u(\txtd v)\right] ^2\txtd u\right) .
		\end{split}
    \label{example0}
\end{equation}
Now, let us very briefly explore different examples of stochastic processes and how this theorem would apply to these processes, along with the corresponding limiting probability equation.

Consider first the case where the Brownian term is only multiplied by a constant, and the graph connects all nodes with each other, i.e., the adjacency matrix $A_{ij}=1$ for $i=1,..,N$. We obtain the following stochastic process:
\begin{equation}
    \begin{aligned}
        \txtd X_i(t) &= f(X_i(t)) \txtd t + \frac{1}{N} \sum_{j=1}^{N} g(X_i, X_j) \txtd t +\sigma \txtd B_t^i.
    \end{aligned}
\end{equation}
In this case, the limit differential equation is:
\begin{equation}
		\begin{split}
			\partial_t \bar{\mu}_{t} &+ \partial_x \left(\bar{\mu}_{t} f(x) + \bar{\mu}_{t} \int_{\mathbb{X}} g(x,y) ~\mu_{t}(\txtd y) \right)=\frac{\sigma^2}{2}\partial^2_x \bar {\mu}_{t}.  
		\end{split}
\end{equation}

Now, assume that the limiting DGMs $\eta$ and $\hat{\eta}$ have sufficiently good properties, and $\eta$, $\hat{\eta} \in \mathcal{B}(I, \mathcal{M}_{+,abs}(I)) \bigcap \mathcal{C}(I, \mathcal{M}_{+,abs}(I))$, where $\mathcal{M}_{+,abs}(I)$ is the set of all finite Borel positive measures on $I$ absolutely continuous with respect to the Lebesgue measure $m$. In this case, denoting $W$ and $\hat{W}$ as the respective Radon-Nikodym derivatives of the previous DGMs, the system can be seen as a representation through graphons. In this space, the adjacency matrices are represented through graphons, and the limiting equation has the form
\begin{equation}
		\begin{split}
			\partial_t \bar{\mu}_{t} &+ \partial_x \left(\bar{\mu}_{t} f(x) + \int_I{\mu}_{u,t} \int_I \int_{\mathbb{X}} W(u,v)g(x,y) \mu_{v,t}(\txtd y) \txtd v \txtd u\right)=\\
			&+\frac{1}{2}\partial^2_x\left( \int_I {\mu}_{u,t} \left[ \int_I \int_{\mathbb{X}}\hat{W}(u,v) h(x,y) \mu_{v,t}(\txtd y) \txtd v\right] ^2du\right) .
		\end{split}
    \label{example1}
\end{equation}
This differential equation is consistent with the results obtained in other works \cite{Bayraktar,Coppini} for graphons. Yet, our result is a clear generalization as there are many DGMs that have no graphon representation as we do not assume any absolute continuity of the graph limit. For example, we can apply our result to certain sparse graphs, such as certain ring networks, where the limiting DGM would be $2\delta_u(v)$. The resulting differential equation is then:
\begin{equation}
		\begin{split}
			\partial_t \bar{\mu}_{t} &+ 2\partial_x \left(\bar{\mu}_{t} f(x) + \int_I{\mu}_{u,t} \int_{\mathbb{X}} g(x,y) ~\mu_{u,t}(\txtd y) \txtd u\right)=\\
			&+\partial^2_x\left( \int_I {\mu}_{u,t} \left[ \int_{\mathbb{X}} h(x,y) ~\mu_{u,t}(\txtd y) \right] ^2\txtd u\right) .
		\end{split}
    \label{example2}
\end{equation}
In particular, all the DGM examples discussed for the deterministic particle system case in~\cite{Kuehn} can be carried over now to the stochastic setting, which provides also a clear indication that our approach to mean-field limits via DGMs is quite robust.

\section*{Acknowledgments}
This paper (CP) has been partially supported by Grant PID2022-137228OB-I00 funded by the Spanish Ministerio de Ciencia, Innovacion y Universidades, MICIU/AEI/10.13039/501100011033 \& ``ERDF/EU A way of making Europe'', by Grant C-EXP-265-UGR23 funded by Consejeria de Universidad, Investigacion e Innovacion \& ERDF/EU Andalusia Program, and by Modeling Nature Research Unit, project QUAL21-011.

\end{document}